\title{Sequential Subspace Optimization for Quasar-Convex Optimization Problems with Inexact Gradient\thanks{ The research was supported by Russian Science Foundation (project No. 21-71- 30005).}}
\titlerunning{Sequential Subspace Optimization}
    \author{Ilya A. Kuruzov \inst{1,3}\orcidID{0000-0002-2715-5489} \and Fedor S. Stonyakin  \inst{1,2} \orcidID{0000-0002-9250-4438}}
\authorrunning{I. Kuruzov et al.}
\institute{Moscow Institute of Physics and Technology, Moscow, Russia
	\and
	V.\,I.\,Vernadsky Crimean Federal University, Simferopol, Russia\\
	\and
	Institute for Information Transmission Problems RAS, Moscow, Russia
	\email{kuruzov.ia@phystech.edu,  fedyor@mail.ru}
	}
\begin{document}

\maketitle

\begin{abstract}
It is well-known that accelerated gradient first-order methods possess optimal complexity estimates for the class of convex smooth minimization problems. In many practical situations it makes sense to work with inexact gradient information. However, this can lead to an accumulation of corresponding inexactness in the theoretical estimates of the rate of convergence. We propose one modification of the Sequential Subspace Optimization Method (SESOP) for minimization problems with $\gamma$-quasar-convex functions with inexact gradient. A theoretical result is obtained indicating the absence of accumulation of gradient inexactness. A numerical implementation of the proposed version of the SESOP method and its comparison with the known Similar Triangle Method with an inexact gradient is carried out.

\keywords{Subspace Optimization Method,
Inexact Gradient,
Quasar-Convex Functions}
\end{abstract}

\section*{Introduction}\label{sec1_introduction}

It is well-known that accelerated gradient-type methods possess optimal complexity estimates \cite{NemirovskyYudin} for the class of convex smooth minimization problems. In many practical situations it makes sense to work with inexact gradient information (see e.g. \cite{DevolderThesis}, \cite{DevPaper}, \cite{Polyak1987}, \cite{Vasin2021}). For example, this is relevant for gradient-free optimization methods (when estimating the gradient by finite differences) in infinite dimensional spaces for inverse problems (see, e.g. \cite{Kabanikhin}).

However, this can lead to an accumulation of corresponding inexactness in the theoretical estimates of the rate of convergence. Let us consider minimization problems of convex and $L$-smooth function $f$ ($\|\cdot\|$ is a usual Euclidean norm) 
\begin{equation}\label{Lsmothness}
\|\nabla f(x) - \nabla f(y)\| \leqslant L \|x - y\| \quad \forall x, y \in \mathbb{R}^n    
\end{equation}
with an inexact gradient $g: \mathbb{R}^n \rightarrow \mathbb{R}^n$:
\begin{equation}\label{InexactGrad}
\|g(x) - \nabla f(x)\| \leqslant \delta,
\end{equation}
where $L>0$ and $\delta > 0$. For the considered class of problem, the following estimate for accelerated gradient-type methods:
$$
f(x_N) - \min_{x \in \mathbb{R}^n} f(x) = O\left(L\|x_0 - x^*\|^2N^{-2} + \delta\max_{k \leq N }{\|x_k - x^*\|}\right)
$$
is known \cite{Aspremont2008,Vasin2021} for each  $x^*:$ $f(x^*) = \min_{x \in \mathbb{R}^n} f(x)$. It is clear that the quantity $\max_{k \leq N }{\|x_k - x^*\|}$ can be not small enough. In this paper, we propose one modification of the Sequential Subspace Optimization Method \cite{SESOP_2005} with an $\delta$-additive noise in gradient \eqref{InexactGrad} and prove 
the following estimate:
$$f(x_N) - \min_{x \in \mathbb{R}^n} f(x) =  O\left(L\|x_0 - x^*\|^2N^{-2} + \delta \|x_0 - x^*\|\right),$$
where $x_0$ is the starting point of algorithm. Thus, a certain solution to the problem of accumulating the gradient inexactness is proposed for a special accelerated gradient method. It is important that we also consider some type of non-convex problems \cite{Hardt,NearOpt}.

The article consists of an introduction, 3 main sections and conclusion.

In the first main section \ref{sect1}, we propose and analyze a new modification of Subspace Optimization Method (Algorithm \ref{alg:sesop}) for minimization problems with $\gamma$-quasar-convex functions with an inexact gradient. The use of such a specific method made it possible to obtain a significant result on the non-accumulation of the additive gradient inexactness in the estimate of the convergence rate (Theorem \ref{SESOP_theorem}). 

However, this result for Algorithm \ref{alg:sesop} is essentially tied to the structure of this method, which is associated with auxiliary low-dimensional minimization problems. Therefore, it is important to investigate the influence of errors in solving such problems on the final estimate of the rate of convergence. Section \ref{sect2} is devoted to this question and Theorem \ref{SESOP_theorem_full} is obtained. 

The last main section \ref{sect3} is devoted to numerical illustration of the obtained theoretical results for one example of a quadratic function minimization problem. Firstly, we show that the convergence may be significantly better than the theoretical estimates for Algorithm \ref{alg:sesop}. Secondly, we compare Algorithm \ref{alg:sesop} with another known accelerated Similar Triangles Method (STM) for the case of additive gradient noise \cite{Vasin2021}. The STM was chosen for comparison with Algorithm \ref{alg:sesop} for the following reasons:
\begin{itemize}
\item in the case of exact gradient information ($\delta = 0$ in \eqref{InexactGrad}) both the STM and the SESOP possess the optimal rate of convergence $O(N^{-2})$; 
\item for STM with an inexact gradient \eqref{InexactGrad}, a theoretical estimate of the quality of the solution is known (\cite{Vasin2021}, Theorem 1 and Remark 3).
\end{itemize}

Let us introduce some auxiliary notations and definitions. 

Throughout this paper $\langle \mathbf{x}, \mathbf{y} \rangle$ $\|\cdot\|$ means the inner product of vectors $\mathbf{x} = (x_1, x_2, ..., x_n), \, \mathbf{y} = (y_1, y_2, ..., y_n) \in\mathbb{R}^n$ and is given by the formula 
$\langle \mathbf{x}, \mathbf{y} \rangle = \sum\limits_{k=1}^n x_ky_k$.

It turns out that it is possible to formulate the main results of the work for a certain class of not necessarily convex problems. Let us recall the definition of the class of $\gamma$-quasar-convex functions (see \cite{Hardt,NearOpt}).
\begin{definition}
Assume that $\gamma \in (0,1]$ and let $\mathbf{x}^*$ be a minimizer of the differentiable function $f:\mathbb{R}^n \rightarrow \mathbb{R}$. The function $f$ is $\gamma$-quasar-convex with respect to $\mathbf{x}^*$ if for all $\mathbf{x}\in\mathbb{R}^n$,
\begin{equation}
    f(\mathbf{x}^*)\geq f(\mathbf{x}) + \frac{1}{\gamma}\langle\nabla f(\mathbf{x}), \mathbf{x}^*-\mathbf{x}\rangle.
\end{equation}


\end{definition}

For example, a non-convex function $f(x) = |x| (1 - e^{-|x|})$ is a $1$-quasar-convex \cite{AccWQC,NearOpt}. The class of $\gamma$-quasar-convex functions is also called $\gamma$-weakly quasi-convex functions (see \cite{AccWQC}). Clearly each convex function is also $1$-quasar-convex. So, all results of this paper are applicable to convex optimization problems with an inexact gradient information.





\section{Subspace Optimization Method with Inexact Gradient}\label{sect1}

In this section we present some variant of the SESOP (Sequential Subspace Optimization) method \cite{SESOP_2005} for $\gamma$-quasar-convex functions with an inexact gradient. We generalize the results \cite{AccWQC} for SESOP method with inexact gradient on the class of $\gamma$-quasar-convex functions. In other words, our modifications of the SESOP method works with some approximation $g(x)$ of gradient $\nabla f(x)$ at each point $x \in \mathbb{R}^n$. 

Similarly to \cite{AccWQC,SESOP_2005} we start with a description of the investigated algorithm. Let $D_k=\|\mathbf{d}_k^0\,\mathbf{d}_k^1\,\mathbf{d}_k^2\|$ be an $n\times 3$ matrix, the columns of which are the following vector:
$$\mathbf{d}_k^0 = g(\mathbf{x}_k), \quad \mathbf{d}_k^1 = \mathbf{x}_k-\mathbf{x}_0,\quad\mathbf{d}_k^2 = \sum\limits_{i=0}^k \omega_i g(\mathbf{x}_i),$$
where $\omega_0=1, \omega_i = \frac{1}{2}+\sqrt{\frac{1}{4}+\omega_{i-1}^2}.$

For all $k\geq 1$ we have $\omega_k=\frac{1}{2}+\sqrt{\frac{1}{4}+\omega_{k-1}^2} \geq \frac{1}{2} + \omega_{k-1}$
and $\omega_k=\frac{1}{2}+\sqrt{\frac{1}{4}+\omega_{k-1}^2} \leq 1 + \omega_{k-1}.$

So, it holds that
\begin{equation}
    \label{w_k_est}
    k+1\geq \omega_k\geq \frac{k+1}{2}.
\end{equation}

The matrices $D_k$ will generate the subspaces over which we will minimize our objective function. With $D_k$ defined this way, the proposed algorithm takes the following form:

\begin{algorithm}
	\caption{A modification of the SESOP method with an inexact gradient}
  \label{alg:sesop}
  \begin{algorithmic}[1]
  \REQUIRE objective function $f$ with an inexact gradient $g$, initial point $\mathbf{x}_0$, number of iterations $T$.

  \FOR{$k =0, \hdots, T-1$}
      \STATE Find the optimal step
      \begin{equation}\label{eqsubproblem}
          \tau_k \leftarrow \arg\min_{\tau\in\mathbb{R}^3}f\left(\mathbf{x}_k + D_k\tau\right)
      \end{equation}
      \STATE $\mathbf{x}_{k+1} \leftarrow \mathbf{x}_k + D_k\tau_k$
  \ENDFOR
\RETURN $\mathbf{x}_T$
	\end{algorithmic}
\end{algorithm}

Let us show that the main advantage of the SESOP method with an inexact gradient is the absence in theoretical estimate the term $\max_k \|x_k - x^*\|$. Before the proof of this result, we need to estimate the error accumulation in $\mathbf{d}_k^3$ vector.

\begin{lemma}
\label{a_lemma_w}
Let the objective function $f$ be $L$-smooth and $\gamma$-quasar-convex with respect to $\mathbf{x}^*$. Suppose also for the inexact gradient $g:\mathbb{R}^n\rightarrow\mathbb{R}^n$ of function $f$ there is some constant $\delta_1\geq 0$ such that for all $\mathbf{x}\in\mathbb{R}^n$
    \begin{equation}
        \|g(\mathbf{x})-\nabla f(\mathbf{x})\|\leq\delta_1.
    \end{equation}
Let $\{\mathbf{x}_j\}_j$ be a sequence of points generated by Algorithm \ref{alg:sesop}. Then the following inequality holds:
\begin{equation}
    \left\|\sum\limits_{k=0}^{T} \omega_k g(\mathbf{x}_k)\right\|^2 \leq 2\sum\limits_{j=0}^T \omega_j^2 \|g(\mathbf{x}_j)\|^2 + 72 T^4 \delta_1^2
\end{equation}
for each $T\in\mathbb{N}$.
\end{lemma}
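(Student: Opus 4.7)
The plan is to exploit the key property of Algorithm~\ref{alg:sesop}: since $\mathbf{x}_{k+1}$ is the \emph{exact} minimizer of $f$ on the affine subspace $\mathbf{x}_k + \mathrm{range}(D_k)$, the first-order optimality condition $D_k^{\top}\nabla f(\mathbf{x}_{k+1}) = 0$ forces the true gradient at $\mathbf{x}_{k+1}$ to be orthogonal to every column of $D_k$. Applied to the third column this reads $\langle \nabla f(\mathbf{x}_{k+1}),\sum_{i=0}^{k}\omega_i g(\mathbf{x}_i)\rangle = 0$, which is the only ingredient from the algorithm's structure that the argument will need. Note that neither smoothness nor quasar-convexity is used for this lemma; the role of smoothness of $f$ is only to make the subspace minimum well-defined.

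Set $S_k := \sum_{i=0}^{k}\omega_i g(\mathbf{x}_i)$ and $e_k := g(\mathbf{x}_k) - \nabla f(\mathbf{x}_k)$, so that $\|e_k\|\leq\delta_1$. The orthogonality above rewrites as $\langle g(\mathbf{x}_{k+1}),S_k\rangle = \langle e_{k+1},S_k\rangle$, and by Cauchy--Schwarz,
\begin{equation*}
|\langle g(\mathbf{x}_{k+1}),S_k\rangle|\leq \delta_1\|S_k\|.
\end{equation*}
Next I would use the telescoping identity
\begin{equation*}
\|S_T\|^2 = \sum_{k=0}^{T}\omega_k^2\|g(\mathbf{x}_k)\|^2 + 2\sum_{k=1}^{T}\omega_k\langle g(\mathbf{x}_k),S_{k-1}\rangle,
\end{equation*}
substitute the bound just obtained, and denote $A_T := \sum_{k=0}^{T}\omega_k^2\|g(\mathbf{x}_k)\|^2$ to arrive at $\|S_T\|^2 \leq A_T + 2\delta_1\sum_{k=1}^{T}\omega_k\|S_{k-1}\|$.

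To close the estimate I would introduce $M_T := \max_{0\leq k\leq T}\|S_k\|$ and use $\omega_k\leq k+1$ from~\eqref{w_k_est} to bound $\sum_{k=1}^{T}\omega_k \leq (T+1)(T+2)/2 = O(T^2)$, which reduces the previous line to a quadratic-type inequality $M_T^2 \leq A_T + C_1 T^2\delta_1 M_T$. Solving for $M_T$ by the quadratic formula and using $(\alpha+\beta)^2\leq 2\alpha^2+2\beta^2$ yields $M_T^2 \leq 2A_T + C_2 T^4\delta_1^2$ for an explicit constant $C_2$, from which the stated bound follows since $\|S_T\|^2\leq M_T^2$ and the value $72$ is obtained by taking an intentionally loose estimate in this last step. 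The main obstacle is conceptual rather than computational: the subspace-optimality orthogonality is a property of the \emph{true} gradient $\nabla f$, not of $g$, so the inexactness $e_k$ inevitably contaminates the cross terms; absorbing these $O(\delta_1)$ contributions uniformly in $k$ is precisely what forces the use of the max-bound $M_T$ rather than a naive direct induction on $\|S_T\|^2$.
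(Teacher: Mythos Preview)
Your approach is correct and, if anything, a touch cleaner than the paper's. Both proofs start identically: use the subspace optimality $\langle \nabla f(\mathbf{x}_k),S_{k-1}\rangle=0$ to bound the cross term by $\delta_1\|S_{k-1}\|$, and telescope to reach $\|S_T\|^2\leq A_T+2\delta_1\sum_{k=1}^T\omega_k\|S_{k-1}\|$. The divergence is in how to control the sum $\sum_k\omega_k\|S_{k-1}\|$. You bound each $\|S_{k-1}\|$ by the single quantity $M_T=\max_{k\le T}\|S_k\|$ and then solve one quadratic inequality in $M_T$. The paper instead runs a \emph{reverse} recursion: from $\|S_j\|^2\geq \|S_{j-1}\|^2-2\omega_j\delta_1\|S_{j-1}\|$ it extracts $\|S_{j-1}\|\leq 2\omega_j\delta_1+\|S_j\|$, iterates to get $\|S_{j}\|\leq \|S_T\|+\delta_1 T(T+3)$ for every $j<T$, substitutes this back into the telescoped bound, and only then solves a quadratic in $\|S_T\|$. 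Your max-trick short-circuits that detour and in fact yields the sharper constant $36$ in place of $72$; the paper's backward chaining is a bit more laborious but has the minor aesthetic advantage of never introducing an auxiliary ``max'' quantity.

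One small point to make explicit when you write this up: to pass from $\|S_T\|^2\leq A_T+C_1T^2\delta_1 M_T$ to $M_T^2\leq A_T+C_1T^2\delta_1 M_T$ you need the same telescoped inequality for every index $j\le T$, not just for $T$; this is immediate (replace $T$ by $j$ and use $A_j\le A_T$, $\sum_{k\le j}\omega_k\le\sum_{k\le T}\omega_k$), but should be stated.
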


\begin{proof}
Let us define $W_{T} = \left\|\sum\limits_{k=0}^{T} \omega_k g(\mathbf{x}_k)\right\|$. Note that  $\mathbf{d}^2_j=\sum\limits_{k=0}^{j} \omega_k g(\mathbf{x}_k)$ and $\|d_j^2\| \leq W_j$ for each $j$. For $W_{T}$ we have equality for this value:
\begin{equation}
\label{W_T_2_expression}
    W_T^2 = \omega_T^2 \|g(\mathbf{x}_T)\|^2 + 2\left\langle w_T g(\mathbf{x}_T), \mathbf{d}^2_{T-1}\right\rangle + W_{T-1}^2
\end{equation}
and we have that $$\nabla f(\mathbf{x}_T) \perp  \mathbf{d}^2_{T-1},$$ because of optimizing on the subspace $\mathbf{x}_{T-1}~+~ \mathbf{d}^2_{j-1}$. Therefore we have the following inequality $\left(\left\langle g(\mathbf{x}_j), \ \mathbf{d}^2_{j-1}\right\rangle = 0\right)$:
\begin{equation}
    \label{delta_ineq}
    |\left\langle g(\mathbf{x}_j), \ \mathbf{d}^2_{j-1}\right\rangle| = |\left\langle 
    \nabla f(\mathbf{x}_j),  \mathbf{d}^2_{j-1}\right\rangle + \left\langle g(\mathbf{x}_j) - \nabla f(\mathbf{x}_j),  \mathbf{d}^2_{j-1}\right\rangle| \leq \delta_1 W_{j-1}
\end{equation}
for all $j\geq 1.$ So, we have the following correlations for $W_T$:
$$W_T^2 = \sum\limits_{j=0}^T \omega_j^2 \|g(\mathbf{x}_j)\|^2 + 2\sum\limits_{j=1}^T w_j\left\langle  g(\mathbf{x}_j), \mathbf{d}^2_{j-1}\right\rangle,$$
and
\begin{equation}
    \label{up_W_est}
    W_T^2 \leq \sum\limits_{j=0}^T \omega_j^2 \|g(\mathbf{x}_j)\|^2 +2\delta_1 \sum\limits_{j=1}^T w_j W_{j-1}.
\end{equation}

On the other hand, we can estimate the inner product by the Cauchy–Schwarz inequality  for $T=j$ in \eqref{W_T_2_expression} and to get the following estimate:
$$ W_j^2 \geq - 2w_j \delta_1 W_{j-1} + W_{j-1}^2.$$

Solving the previous inequality on $W_j$, we have ($\sqrt{a + b} \leq \sqrt{a} + \sqrt{b}$ for all $a, b \geq 0$)
$$W_{j-1} \leq w_j\delta_1 + \sqrt{w_j^2 \delta_1^2 + W_j^2} \leq 2w_j\delta_1 + W_j.$$

By induction we have the following estimate:
$$W_j \leq 2\delta_1 \sum\limits_{k=j+1}^T w_k + W_T\text{ for } j=\overline{0, T-1}.$$
From \eqref{w_k_j_T} we have that
$$W_j \leq \delta_1 T(T+3) + W_T\text{ for } j=\overline{0, T-1}.$$
The last inequality and \eqref{up_W_est} mean that
\begin{equation}
    W_T^2 \leq \sum\limits_{j=0}^T \omega_j^2 \|g(\mathbf{x}_j)\|^2 +2\delta_1^2 T (T+3) +  2\delta_1W_T  \sum\limits_{j=1}^T w_j, 
\end{equation}

\begin{equation}
    W_T^2 \leq  \sum\limits_{j=0}^T \omega_j^2 \|g(\mathbf{x}_j)\|^2 +2\delta_1^2 T (T+3) + T(T + 3) \delta_1 W_T.
\end{equation}

The value $W_T \geq 0$ by definition. One of the roots of the previous quadratic inequality is always negative. So, for $W_T$ to meet this inequality, its value must be not more than the largest root of the corresponding quadratic function:
$$
    W_T \leq \frac{1}{2}(T^2 + 3T)\delta_1 + \frac{1}{2}\sqrt{(T^2 + 3T)^2 \delta_1^2 + 4\sum\limits_{j=0}^T \omega_j^2 \|g(\mathbf{x}_j\|^2 +8\delta_1^2 T (T+3)}.$$

Taking into account $T\geq 1$, we have
$$W_T \leq 2T^2\delta_1 + \frac{1}{2}\sqrt{48 T^4 \delta_1^2 + 4\sum\limits_{j=0}^T \omega_j^2 \|g(\mathbf{x}_j)\|^2};$$

Due to the inequality $\sqrt{a+b}\leq\sqrt{a}+\sqrt{b}$ (for all $a, b \geq  0$)
we have
$$W_T \leq 6 T^2\delta_1 + \sqrt{\sum\limits_{j=0}^T \omega_j^2 \|g(\mathbf{x}_j)\|^2}.$$
Further, the inequality $(a+b)^2\leq 2a^2+2b^2$ (for all $a, b \in \mathbb{R}$) means that
$$W_T^2 \leq 72 T^4 \delta_1^2 + 2\sum\limits_{j=0}^T \omega_j^2 \|g(\mathbf{x}_j)\|^2.$$
\end{proof}

Using Lemma \ref{a_lemma_w}, we can prove the following main result of this section.

\begin{theorem}
\label{SESOP_theorem}
Let the objective function $f$ be $L$-smooth and $\gamma$-quasar-convex with respect to $\mathbf{x}^*$. Also, for the inexact gradient $g:\mathbb{R}^n\rightarrow\mathbb{R}^n$ there is some constant $\delta_1\geq 0$ such that for all $\mathbf{x}\in\mathbb{R}^n$
    \begin{equation}
    \label{g_cond_i}
        \|g(\mathbf{x})-\nabla f(\mathbf{x})\|\leq\delta_1.
    \end{equation}
Then the sequence $\{\mathbf{x}_k\}$ generated by Algorithm \ref{alg:sesop} satisfies
\begin{equation}
    \label{est_SESOP}
     f(\mathbf{x}_k)-f^*\leq \frac{8LR^2}{\gamma^2 k^2}  + 4\left(\frac{R}{\gamma} + 17 \right)\delta_1,
\end{equation}
where $R=\|\mathbf{x}^*-\mathbf{x}_0\|$.
\end{theorem}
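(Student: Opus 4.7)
The plan is to extend the estimating-sequence analysis of \cite{AccWQC} for SESOP from the exact-gradient convex case to inexact gradients and $\gamma$-quasar-convex objectives, keeping track of the $\delta_1$-corrections at every place where the subspace-optimality condition is invoked. Three ingredients will drive the argument: a descent bound from $L$-smoothness along the direction $\mathbf{d}_k^0 = g(\mathbf{x}_k)$; the $\gamma$-quasar-convex inequality, which converts a gradient inner product into a function-value gap; and the near-orthogonality of $\nabla f(\mathbf{x}_k)$ to the columns of $D_{k-1}$ inherited from the first-order optimality of $\tau_{k-1}$ in \eqref{eqsubproblem}.

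Set $a_k := f(\mathbf{x}_k) - f^*$. Since $\mathbf{x}_{k+1}$ minimizes $f$ over the affine hull of $\mathbf{x}_k$ and the columns of $D_k$, comparing with the test point $\mathbf{x}_k - (1/L)g(\mathbf{x}_k)$ and invoking $L$-smoothness together with \eqref{g_cond_i} yields a descent inequality of the form
$$a_{k+1} \leq a_k - \frac{c_1}{L}\|g(\mathbf{x}_k)\|^2 + \frac{c_2\delta_1^2}{L}.$$
The weights $\omega_k$ satisfy the telescoping identity $\omega_k^2 - \omega_{k-1}^2 = \omega_k$, which follows directly from their defining recurrence. Abel summation of the descent inequality against $\omega_k^2$ therefore converts $\sum \omega_k^2(a_k - a_{k+1})$ into $\sum \omega_k a_k$, producing a bound of the form $\sum_{k=0}^T \omega_k^2 \|g(\mathbf{x}_k)\|^2 \leq c_3 L \sum_{k=0}^T \omega_k a_k + (\text{lower-order }\delta_1\text{ terms})$.

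For the matching upper bound on $\sum \omega_k a_k$, I would exploit that subspace optimality forces $\nabla f(\mathbf{x}_k) \perp \mathrm{span}(D_{k-1})$ for $k \geq 1$; since $\mathbf{x}_k - \mathbf{x}_0 = (\mathbf{x}_{k-1} - \mathbf{x}_0) + D_{k-1}\tau_{k-1}$ lies in that span, the inner product $\langle \nabla f(\mathbf{x}_k), \mathbf{x}_k - \mathbf{x}_0\rangle$ vanishes exactly. Combined with quasar-convexity and \eqref{g_cond_i} this yields
$$\gamma a_k \leq \langle g(\mathbf{x}_k), \mathbf{x}_0 - \mathbf{x}^*\rangle + \delta_1 R.$$
Weighting by $\omega_k$, summing and applying Cauchy--Schwarz then give
$$\gamma \sum_{k=0}^T \omega_k a_k \leq R \left\|\sum_{k=0}^T \omega_k g(\mathbf{x}_k)\right\| + \delta_1 R \sum_{k=0}^T \omega_k,$$
and Lemma \ref{a_lemma_w} bounds the norm on the right by roughly $\sqrt{\sum_k \omega_k^2 \|g(\mathbf{x}_k)\|^2} + 6T^2 \delta_1$, which by the previous paragraph is at most $O\bigl(\sqrt{L \sum_k \omega_k a_k}\bigr) + 6T^2 \delta_1$.

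Substituting leaves a quadratic inequality in $S_T := \sqrt{\sum_k \omega_k a_k}$ of the shape $\gamma S_T^2 \leq c R\sqrt{L}\,S_T + O(R T^2 \delta_1)$, whose largest root yields $S_T^2 = O(L R^2/\gamma^2) + O(R T^2 \delta_1/\gamma)$. Combined with descent monotonicity $a_T \leq a_k$ for $k \leq T$ and with $\sum_{k=0}^T \omega_k \gtrsim T^2/4$ from \eqref{w_k_est}, this produces $a_T = O\bigl(L R^2/(\gamma^2 T^2)\bigr) + O(R \delta_1/\gamma) + O(\delta_1)$, matching \eqref{est_SESOP}. The main obstacle I anticipate is keeping the $\delta_1$-dependence linear and additive: the $T^4 \delta_1^2$ tail inside Lemma \ref{a_lemma_w} and the $\delta_1^2/L$ tail from the descent step must both, after square-rooting and division by $\omega_T^2 \sim T^2$, collapse to the $O(\delta_1)$ constant visible in the additive $4(R/\gamma + 17)\delta_1$ correction; this forces a careful Cauchy--Schwarz weighting rather than a crude split.
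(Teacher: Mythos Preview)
Your proposal follows essentially the same route as the paper: the descent bound from $L$-smoothness along $g(\mathbf{x}_k)$, the orthogonality $\nabla f(\mathbf{x}_k)\perp(\mathbf{x}_k-\mathbf{x}_0)$ combined with $\gamma$-quasar-convexity, the Cauchy--Schwarz step followed by Lemma~\ref{a_lemma_w}, and the Abel telescoping via $\omega_k^2-\omega_{k-1}^2=\omega_k$ are exactly the ingredients the paper uses. The only tactical difference is in the final extraction of $\varepsilon_T$: the paper keeps the boundary term $-\omega_{T-1}^2\varepsilon_T$ from the Abel summation, writes $\omega_{T-1}^2\varepsilon_T\le S-\tfrac{\gamma^2}{8LR^2}(S-\cdots)^2$ and maximizes the right-hand side over $S$, whereas you drop that term, bound $S$ directly, and then recover $\varepsilon_T$ via the monotonicity $\varepsilon_T\le\varepsilon_k$ together with $\sum_k\omega_k\gtrsim T^2/4$; both routes give the same order and comparable constants.
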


\begin{proof}
By constructing $\mathbf{x}_{k+1}$ we have the following inequality:
\begin{equation}
    \label{k_min_ineq}
    f(\mathbf{x}_{k+1}) = \min_{\mathbf{s}\in\mathbb{R}^3} f\left(\mathbf{x}_k + \sum\limits_{i=0}^2 s_i d_k^i\right)\leq f\left(\mathbf{x}_k + s_0 g(\mathbf{x}_k)\right).
\end{equation}

On the other hand, from $L$-smoothness we have that
$$f(\mathbf{y})\leq f(\mathbf{x}) + \langle\nabla f(\mathbf{x}), \mathbf{y} - \mathbf{x}\rangle + \frac{L}{2}\|\mathbf{x}-\mathbf{y}\|^2$$
for all $\mathbf{x}, \mathbf{y}\in\mathbb{R}^n$. Further, from $\eqref{g_cond_i}$ follows the corresponding inequality for an inexact gradient:
$$f(\mathbf{y})\leq f(\mathbf{x}) + \langle g(\mathbf{x}), \mathbf{y} - \mathbf{x}\rangle + \frac{L}{2}\|\mathbf{x}-\mathbf{y}\|^2 + \delta_1 \|\mathbf{x}-\mathbf{y}\| \quad \forall \mathbf{x}, \mathbf{y}\in\mathbb{R}^n.$$

Using the inequality $\delta_1 \|\mathbf{x}-\mathbf{y}\| = \left(\sqrt{\frac{1}{L}}\delta_1\right) \left(\sqrt{\frac{L}{1}}\|\mathbf{x}-\mathbf{y}\|\right)\leq \frac{\delta_1^2}{2L} + \frac{L}{2}\|\mathbf{x}-\mathbf{y}\|^2$, we have
\begin{equation}
    \label{g_L_ineq}
f(\mathbf{y})\leq f(\mathbf{x}) + \langle g(\mathbf{x}), \mathbf{y} - \mathbf{x}\rangle + L\|\mathbf{x}-\mathbf{y}\|^2 + \frac{\delta_1^2}{2L} \quad \forall \mathbf{x}, \mathbf{y}\in\mathbb{R}^n.
\end{equation}

On the base of the last inequality and the right part of \eqref{k_min_ineq} for $\mathbf{y}:=\mathbf{x}_k + s_0 g(\mathbf{x}_k)$ and $\mathbf{x}=\mathbf{x}_k$ we can conclude that
\begin{equation}
    f(\mathbf{x}_{k+1}) \leq f(\mathbf{x}_k) +\left(s_0 + s_0^2 L\right)\|g(\mathbf{x}_k)\|^2 + \frac{1}{2L}\delta_1^2
\end{equation}
for each $s_0\in\mathbb{R}$. Further,
\begin{equation}\label{eqineqs0}
     -\left(s_0 + s_0^2 L\right) \|g(\mathbf{x}_k)\|^2 \leq f(\mathbf{x}_k) - f(\mathbf{x}_{k+1}) + \frac{1}{2L}\delta_1^2.
\end{equation}
Maximizing the left part of \eqref{eqineqs0} by $s_0$, we have the following estimate for the inexact gradient norm:
\begin{equation}
      \|g(\mathbf{x}_k)\|^2 \leq 4L(f(\mathbf{x}_k) - f(\mathbf{x}_{k+1})) + 2\delta_1^2.
\end{equation}

So, $\nabla f(\mathbf{x}_k)\perp  \mathbf{x}_k - \mathbf{x}_0$ for all $k>0$ because $\mathbf{x}_k$ is minimizer of $f$ on the subspace containing the directions $\mathbf{x}_k-\mathbf{x}_{k-1}$ and $\mathbf{x}_{k-1}-\mathbf{x}_0$. Because of it we can write
$$\langle \nabla f(\mathbf{x}_k), \mathbf{x}_k-\mathbf{x}^*\rangle = \langle \nabla f(\mathbf{x}_k), \mathbf{x}_0-\mathbf{x}^* \rangle.$$
From this equality and \eqref{g_cond_i} we have
$$f(\mathbf{x}_k)-f(\mathbf{x}^*)\leq \frac{1}{\gamma} \langle g(\mathbf{x}_k), \mathbf{x}_0-\mathbf{x}^* \rangle + \delta_1 \frac{R}{\gamma}.$$

Similarly to the proof of Theorem 3.1 from \cite{AccWQC} we have the following chain of correlations:
\begin{equation}
\label{eps_1}
    \begin{aligned}
        \sum\limits_{k=0}^{T-1}\omega_k(f(\mathbf{x}_k)-f^*)&\leq \frac{1}{\gamma} \left\langle  \sum\limits_{k=0}^{T-1} \omega_k g(\mathbf{x}_k), \mathbf{x}_0-\mathbf{x}^* \right\rangle +\delta_1 \frac{R}{\gamma} \sum\limits_{k=0}^{T-1} \omega_k\\
        &\leq \frac{1}{\gamma} \left\|\sum\limits_{k=0}^{T-1} \omega_k g(\mathbf{x}_k)\right\|R +\delta_1 \frac{R}{\gamma} \sum\limits_{k=0}^{T-1} \omega_k.
    \end{aligned}
\end{equation}

Now we can estimate the multiplier $\left\|\sum\limits_{k=0}^{T-1} \omega_k g(\mathbf{x}_k)\right\|$. According to Lemma \ref{a_lemma_w} and \eqref{w_k_est}, we have the following estimates:
\begin{align*}
    \left\|\sum\limits_{k=0}^{T-1} \omega_k g(\mathbf{x}_k)\right\|^2 &\leq 2\sum\limits_{j=0}^{T-1} \omega_j^2 \|g(\mathbf{x}_j)\|^2 + 256 T^4 \delta_1^2 \\
    & \leq 8L \sum\limits_{k=0}^{T-1} \omega_k^2  (f(\mathbf{x}_k)-f(\mathbf{x}_{k+1})) + 260 T^4 \delta_1^2.
\end{align*}

Note that the choice of $\omega_k$ is equivalent to choosing the largest $\omega_k$ satisfying
$$\omega_k = \begin{cases}1, \text{if $k=0$},\\ \omega_k^2-\omega_{k-1}^2, \text{otherwise}.\end{cases}$$

Now we can estimate the left part of \eqref{eps_1} denoting $\varepsilon_k=f(\mathbf{x}_k)-f^*$ in the following way ($\sqrt{a+b} \leq \sqrt{a} + \sqrt{b}$ for all $a, b \geq 0$):

\begin{equation}
    \begin{aligned}
        S &= \sum\limits_{k=0}^{T-1}\omega_k \varepsilon_k \\
        &\leq \frac{1}{\gamma} \left\|\sum\limits_{k=0}^{T-1} \omega_k g(\mathbf{x}_k)\right\|R +\delta_1 \frac{R}{\gamma} \sum\limits_{k=0}^{T-1} \omega_k\\
         &\leq\left(\frac{8LR^2}{\gamma^2} \sum\limits_{k=0}^{T-1} \omega_k^2 (\varepsilon_k-\varepsilon_{k+1}) + 260 T^4\delta_1^2\right)^{\frac{1}{2}} +\delta_1 \frac{R}{\gamma} \sum\limits_{k=0}^{T-1} \omega_k\\
      &\leq\left(\frac{8LR^2}{\gamma^2} \sum\limits_{k=0}^{T-1} \omega_k^2 (\varepsilon_k-\varepsilon_{k+1})\right)^{\frac{1}{2}} + 17 T^2 \delta_1 +\delta_1 \frac{R}{\gamma} \sum\limits_{k=0}^{T-1} \omega_k\\
         &=\sqrt{\frac{8LR^2}{\gamma^2}} \sqrt{S - \varepsilon_T\omega_{T-1}^2} +  17 T^2 \delta_1  +\delta_1 \frac{R}{\gamma} \sum\limits_{k=0}^{T-1} \omega_k.
    \end{aligned}
\end{equation}

From the inequality above we have
\begin{equation}\label{eqat26}
    \omega_{T-1}^2 \varepsilon_T \leq S - \frac{\gamma^2}{8LR^2}\left(S - \delta_1 \frac{R}{\gamma} \sum\limits_{k=0}^{T-1} \omega_k - 17 T^2 \delta_1 \right)^2.
\end{equation}

Maximizing the right part of \eqref{eqat26} on $S$ we get
\begin{equation}
    \omega_{T-1}^2 \varepsilon_T \leq \frac{2LR^2}{\gamma^2} +  \delta_1 \frac{R}{\gamma} \sum\limits_{k=0}^{T-1} \omega_k + 17 T^2 \delta_1.
\end{equation}

Now from \eqref{w_k_est} we have
\begin{equation}
    \begin{aligned}
    \omega_{T-1}^2 \varepsilon_T &\leq \frac{2LR^2}{\gamma^2} +  \delta_1 \frac{R}{\gamma} \sum\limits_{k=0}^{T-1} \omega_k +  17 T^2 \delta_1\\
    &\leq\frac{2LR^2}{\gamma^2} +  T^2\left( \delta_1 \frac{R}{\gamma} +17 \delta_1\right).
    \end{aligned}
\end{equation}

Dividing both parts of this inequality by $w_{T-1}^2$  and using the lower estimate for it \eqref{w_k_est} we get

\begin{equation}
    \begin{aligned}
 \varepsilon_T \leq \frac{8LR^2}{\gamma^2 T^2} + 4\delta_1 \frac{R}{\gamma} + 68\delta_1 =
 \frac{8LR^2}{\gamma^2 T^2} + 4\left(\frac{R}{\gamma} + 17 \right)\delta_1,
    \end{aligned}
\end{equation}
Q.E.D.
\end{proof}

\section{Subspace Optimization Method with Inexact Solutions of Auxiliary Subproblems}\label{sect2}

The result of the previous section shows that the SESOP algorithm can work with additive noise in a gradient. It is essential that the method leads to the need to solve auxiliary low-dimensional optimization problems.
So, there is an interesting case when the auxiliary problem \eqref{eqsubproblem} cannot be solved exactly. We consider this case in the following theorem.

\begin{theorem}
\label{SESOP_theorem_full}
Let the objective function $f$ be $L$-smooth and $\gamma$-quasar-convex with respect to $\mathbf{x}^*$. Let $\tau_k$ be the step value obtained with the inexact solution of the auxiliary problem \eqref{eqsubproblem} on step 2 in Algorithm \ref{alg:sesop} on the $k$-th iteration. Namely, the following conditions for inexactness hold:
\begin{itemize}
    \item[(i)] For the inexact gradient $g:\mathbb{R}^n\rightarrow\mathbb{R}^n$ there is some constant $\delta_1\geq 0$ such that for all points $\mathbf{x}\in\mathbb{R}^n$ condition \eqref{g_cond_i} holds.
    \item[(ii)] The inexact solution $\tau_k$ meets the following condition:
    \begin{equation}
\label{grad_orth_i}
|\left\langle \nabla f(\mathbf{x}_k), \mathbf{d}_{k-1}^2 \right\rangle |\leq k^2 \delta_2
\end{equation}
for some constant $\delta_2 \geq 0$ and each $k\in\mathbb{N}$. Note that $\mathbf{x}_k=\mathbf{x}_{k-1}+D_{k-1}\tau_{k-1}$.
    \item[(iii)] The inexact solution $\tau_k$ meets the following condition for some constant $\delta_3 \geq 0$:
    \begin{equation}
    \label{grad_orth_ii}
        |\left\langle \nabla f(\mathbf{x}_k), \mathbf{x}_k - \mathbf{x}_0\right\rangle| \leq \delta_3.
    \end{equation}
    \item[(iv)] The problem from step 2 in Algorithm \ref{alg:sesop} is solved with accuracy $\delta_4 \geq 0$ on function on each iteration, i.e.
    $f(\mathbf{x}_k) - \min_{\tau\in\mathbb{R}^n}f(\mathbf{x}_{k-1} + D_{k-1}\tau) \leq \delta_4$.
\end{itemize}
Then the sequence $\{\mathbf{x}_k\}$ generated by Algorithm \ref{alg:sesop} satisfies
\begin{equation}
 f(\mathbf{x}_k)-f^* \leq \frac{8LR^2}{\gamma^2 k^2} + \left(\frac{R}{\gamma}+ 10\right)\delta_1 + 4\sqrt{\delta_2} +  \delta_3 + 5\sqrt{\frac{L\delta_4}{k}}
\end{equation}
for each $k\geq 8$, where $R=\|\mathbf{x}^*-\mathbf{x}_0\|$.
\end{theorem}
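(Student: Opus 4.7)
The plan is to mirror the proof of Theorem \ref{SESOP_theorem} step by step, identifying the places where the original argument used an exact orthogonality or an exact minimization and inserting the corresponding error terms from (i)--(iv). Specifically, (i) plays the same role as before; (ii) replaces the orthogonality $\nabla f(\mathbf{x}_j)\perp\mathbf{d}^2_{j-1}$ used inside Lemma \ref{a_lemma_w}; (iii) replaces the orthogonality $\nabla f(\mathbf{x}_k)\perp(\mathbf{x}_k-\mathbf{x}_0)$ used when exploiting $\gamma$-quasar-convexity; and (iv) enters the descent estimate obtained by comparing $f(\mathbf{x}_{k+1})$ with the one-dimensional minimization along $g(\mathbf{x}_k)$.

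First, I would derive a noisy analog of Lemma \ref{a_lemma_w}. In its proof, the bound $|\langle g(\mathbf{x}_j),\mathbf{d}^2_{j-1}\rangle|\leq\delta_1 W_{j-1}$ was obtained from exact orthogonality together with (i). Under (ii), that estimate is replaced by
\begin{equation*}
|\langle g(\mathbf{x}_j),\mathbf{d}^2_{j-1}\rangle|\leq \delta_1 W_{j-1} + j^2\delta_2.
\end{equation*}
Propagating this extra term through both the upper bound \eqref{up_W_est} and the lower-bound recursion $W_{j-1}\leq 2w_j\delta_1+W_j$ (which now picks up an $O(j^2\delta_2)$ correction), one obtains, after the same quadratic-inequality manipulation, an estimate of the form $W_T^2\leq 2\sum_{j=0}^T \omega_j^2\|g(\mathbf{x}_j)\|^2 + C_1 T^4\delta_1^2 + C_2 T^4\delta_2$ for absolute constants $C_1,C_2$.

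Next, I would redo the one-dimensional descent step. Condition (iv) gives $f(\mathbf{x}_{k+1})\leq f(\mathbf{x}_k+s_0 g(\mathbf{x}_k))+\delta_4$ for all $s_0\in\mathbb{R}$, so combining with \eqref{g_L_ineq} and optimizing over $s_0$ yields
\begin{equation*}
\|g(\mathbf{x}_k)\|^2\leq 4L\bigl(f(\mathbf{x}_k)-f(\mathbf{x}_{k+1})+\delta_4\bigr)+2\delta_1^2,
\end{equation*}
which is the original bound with an additional $4L\delta_4$ inside the telescoping difference. Then, for the quasar-convexity step, condition (iii) replaces $\langle\nabla f(\mathbf{x}_k),\mathbf{x}_k-\mathbf{x}^*\rangle=\langle\nabla f(\mathbf{x}_k),\mathbf{x}_0-\mathbf{x}^*\rangle$ with the same identity up to an additive $\delta_3$, producing an extra $\delta_3$ term in the single-iteration quasar-convexity inequality and hence, after weighting by $\omega_k$, an extra $\delta_3\sum_k\omega_k$ in the weighted sum \eqref{eps_1}.

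Assembling the three modifications in the chain of inequalities leading to \eqref{eqat26}, I would arrive at a bound of the schematic form
\begin{equation*}
S \leq \sqrt{\tfrac{8LR^2}{\gamma^2}\bigl(S-\omega_{T-1}^2\varepsilon_T + 4L\textstyle\sum_k\omega_k^2\delta_4\bigr) + C\,T^4(\delta_1^2+\delta_2)} + \delta_1\tfrac{R}{\gamma}\textstyle\sum_k\omega_k + \delta_3\textstyle\sum_k\omega_k,
\end{equation*}
with $S=\sum_k\omega_k\varepsilon_k$. Maximizing the resulting quadratic in $S$, using $\omega_k\leq k+1$ and $\omega_{T-1}\geq T/2$, and matching constants, one recovers the claimed estimate. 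The main obstacle is careful bookkeeping to produce the correct scalings of the error terms: the $\sqrt{\delta_2}$ contribution comes from $\sqrt{T^4\delta_2}/\omega_{T-1}^2\sim\sqrt{\delta_2}$, and the $\sqrt{L\delta_4/k}$ contribution from $\sqrt{L\sum_{j<T}\omega_j^2\delta_4}/\omega_{T-1}^2\sim\sqrt{LT^3\delta_4}/T^2=\sqrt{L\delta_4/T}$; both require that the respective error sources pass through a single square root before being divided by $\omega_{T-1}^2$, which is why condition (iv) yields a $\sqrt{\delta_4}$-type term while (iii) only contributes linearly in $\delta_3$. The assumption $k\geq 8$ is used in the final step only to absorb lower-order $O(1/k^2)$ terms into the constants displayed in the statement.
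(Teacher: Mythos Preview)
Your proposal is correct and follows essentially the same route as the paper's proof in Appendix~C, which also relies on a noisy analog of Lemma~\ref{a_lemma_w} (stated there as Lemma~\ref{a_lemma_w_full}) incorporating condition~(ii), the descent inequality with the extra $\delta_4$ from~(iv), the quasar-convexity step perturbed by $\delta_3$ from~(iii), and the same quadratic-in-$S$ maximization followed by division by $\omega_{T-1}^2$. One minor remark: the additive correction to the lower-bound recursion $W_{j-1}\leq 2w_j\delta_1+W_j$ is $j\sqrt{\delta_2}$ rather than $O(j^2\delta_2)$ (since the $j^2\delta_2$ term sits under a square root), but this does not affect your stated $C_2T^4\delta_2$ conclusion or the final scalings.
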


\begin{proof}
The proof of this theorem is somewhat similar to the proof of Theorem \ref{SESOP_theorem} and was moved to the appendix C.
\end{proof}

\begin{remark}
The obtained estimate of the rate of convergence for Algorithm \ref{alg:sesop} does not depend on the value $\max_k\|\mathbf{x}_k-\mathbf{x}^*\|$ and it depends only on $R, L$ and $\gamma > 0$. 
\end{remark}

\begin{remark}
It is clear that when the auxiliary problem \eqref{eqsubproblem} in Algorithm \ref{alg:sesop} has exact solution then $\delta_2=\delta_3=\delta_4=0$. The constant before $\delta_1$ was improved in comparison with the result from Theorem \ref{SESOP_theorem} because of more accurate work with constants in proofs (see Lemmas \ref{a_lemma_w} and \ref{a_lemma_w_full}).
\end{remark}

According to Theorem \ref{SESOP_theorem_full}, the SESOP method for a $\gamma$-convex function can find solution with quality $\varepsilon$ by function after $N=\sqrt{\frac{16LR^2}{\gamma^2 \varepsilon}}$ iterations when the following condition holds:
$$\left(\frac{R}{\gamma}+ 10\right)\delta_1 + 4\sqrt{\delta_2} +  \delta_3 + 5\sqrt{\frac{L\delta_4}{k}}\leq\frac{\varepsilon}{2}.$$
In particular, the SESOP method finds solution with this quality  after $N=\sqrt{\frac{16LR^2}{\varepsilon}}$ iterations for convex functions.

Now we want to discuss the relationship between conditions $(ii)$, $(iii)$ and $(iv)$ from Theorem \ref{SESOP_theorem_full}. The condition on the accuracy of the subproblem $(iv)$ is  natural enough for such methods. Conditions $(ii)$ and $(iii)$ are caused by the form of the method and provide almost orthogonality of the gradient and vectors $\mathbf{d}_k^j, j=1,2$. We can prove the following simple result.

\begin{theorem}
\label{delta_link}
If condition $(iv)$ from Theorem \ref{SESOP_theorem_full} holds, then we can choose $\delta_2, \delta_3 \geq 0$ according to the following estimates:
$$\delta_3 \leq \sqrt{2L \delta_4}\left(\sqrt{\max_{k}(\|D_k\| \|\tau_k\|)} + \sqrt{\|\max_{k}\mathbf{d}^1_{k-1}\|}\right)$$
and
$$\delta_2 \leq \frac{1}{k^2}\sqrt{2L \max_{k}\|\mathbf{d}_k^3\|\delta_4}.$$
\end{theorem}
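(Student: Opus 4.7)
The approach is to show that approximate optimality of $\mathbf{x}_k$ on the affine subspace $\mathbf{x}_{k-1}+\mathrm{span}(D_{k-1})$ forces $\nabla f(\mathbf{x}_k)$ to be almost orthogonal to every direction of that subspace, with orthogonality defect controlled by $\sqrt{L\delta_4}$. This is the quantitative, smooth analogue of the exact first-order optimality condition that was exploited in Section \ref{sect1}, and it immediately converts a function-value accuracy bound into the gradient-inner-product bounds demanded by (ii) and (iii).

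The key step is a one-line lemma: for every vector $\mathbf{v}$ in the column span of $D_{k-1}$,
\[
|\langle \nabla f(\mathbf{x}_k), \mathbf{v}\rangle| \le \|\mathbf{v}\|\sqrt{2L\delta_4}.
\]
To prove it, start from the standard $L$-smoothness upper bound $f(\mathbf{x}_k+t\mathbf{v}) \le f(\mathbf{x}_k) + t\langle\nabla f(\mathbf{x}_k),\mathbf{v}\rangle + \tfrac{L}{2}t^2\|\mathbf{v}\|^2$ and minimize the right-hand side over $t\in\mathbb{R}$; the minimum value equals $f(\mathbf{x}_k) - \frac{\langle\nabla f(\mathbf{x}_k),\mathbf{v}\rangle^2}{2L\|\mathbf{v}\|^2}$. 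Since $\mathbf{x}_k + t\mathbf{v}$ remains in the affine subspace on which $\mathbf{x}_k$ is $\delta_4$-optimal by (iv), this same minimum is bounded below by $f(\mathbf{x}_k)-\delta_4$, and rearranging yields the claim.

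With the lemma in hand I would finish in two quick applications. For (ii), the vector $\mathbf{d}_{k-1}^2$ is literally a column of $D_{k-1}$, so the lemma gives $|\langle\nabla f(\mathbf{x}_k),\mathbf{d}_{k-1}^2\rangle|\le \|\mathbf{d}_{k-1}^2\|\sqrt{2L\delta_4}$; taking a uniform supremum in $k$ and matching the $k^2\delta_2$ form of the hypothesis delivers the stated bound for $\delta_2$. For (iii), decompose
\[
\mathbf{x}_k-\mathbf{x}_0 \;=\; D_{k-1}\tau_{k-1}+\mathbf{d}_{k-1}^1,
\]
where both summands lie in the column span of $D_{k-1}$; applying the lemma to each summand, together with $\|D_{k-1}\tau_{k-1}\|\le \|D_{k-1}\|\,\|\tau_{k-1}\|$ and the triangle inequality, produces the desired two-term bound on $\delta_3$ after taking the supremum over $k$. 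The only place I expect any friction is verifying that the affine subspace implicit in (iv) is precisely $\mathbf{x}_{k-1}+\mathrm{span}(D_{k-1})$, so that the translated competitors $\mathbf{x}_k+t\mathbf{v}$ are admissible in the subproblem; this, however, is immediate from the SESOP update rule $\mathbf{x}_k=\mathbf{x}_{k-1}+D_{k-1}\tau_{k-1}$, so the whole argument reduces to the smoothness-plus-subproblem-accuracy inequality above.
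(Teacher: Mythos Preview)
Your approach is essentially the paper's: both arguments convert the $\delta_4$-accuracy on the subspace into a bound on $|\langle\nabla f(\mathbf{x}_k),\mathbf{v}\rangle|$ for $\mathbf{v}\in\mathrm{span}(D_{k-1})$ via $L$-smoothness, then specialize to $\mathbf{v}=\mathbf{d}_{k-1}^2$ for $(ii)$ and to the decomposition $\mathbf{x}_k-\mathbf{x}_0=D_{k-1}\tau_{k-1}+\mathbf{d}_{k-1}^1$ for $(iii)$. The paper phrases the same computation through the auxiliary function $f_k(\tau)=f(\mathbf{x}_k+D_k\tau)$ and its coordinate-wise Lipschitz constants, whereas you work directly with the descent lemma in the ambient space; your version is slightly cleaner but the content is identical.

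One discrepancy to be aware of: your lemma gives $|\langle\nabla f(\mathbf{x}_k),\mathbf{v}\rangle|\le\|\mathbf{v}\|\sqrt{2L\delta_4}$, so your $\delta_3$ bound comes out as $\sqrt{2L\delta_4}\bigl(\max_k\|D_k\|\|\tau_k\|+\max_k\|\mathbf{d}_{k-1}^1\|\bigr)$ and your $\delta_2$ bound as $\tfrac{1}{k^2}\max_k\|\mathbf{d}_{k-1}^2\|\sqrt{2L\delta_4}$, with the direction norms \emph{outside} the square root, not under it as in the printed statement. This is not a flaw in your argument; the paper's proof records the one-variable Lipschitz constant as $L_k^j=L\|d_k^j\|$, whereas the computation it performs actually yields $L\|d_k^j\|^2$, and with that correction the paper's bound coincides with yours. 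So your proposal proves the dimensionally consistent version of the theorem by the intended method.
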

\begin{proof}
Now we want to express conditions \eqref{grad_orth_i} and \eqref{grad_orth_ii} through accuracy of subproblem solution \eqref{eqsubproblem} $\delta_4$. We need to introduce the following auxiliary function:
\begin{equation}
\label{f_k}
    f_k(\tau) = f(\mathbf{x}_k + D_k \tau).
\end{equation}

Note that $f_k:  \mathbb{R} \rightarrow \mathbb{R}$ and its gradient is a one-dimension derivative. Let function $f_k$ have a Lipschitz continuous gradient with constants $L_k^j$, $j={1,3}$. We can derive these constants from $L$ and the norms of directions $d_k^j$:

$$\left|\frac{d}{d\tau_j} f_k(\tau+\alpha e_j)- \frac{d}{d\tau_j} f_k(\tau)\right| = \left|\left\langle d_k^j, \nabla f(\mathbf{x}^1) - \nabla f(\mathbf{x}^2)\right\rangle\right| \leq L \|d_k^j\| |\alpha|.$$
where $e_j\in\mathbb{R}^3$ is the $j$-th vector in the standard basis, $\alpha\in\mathbb{R}$ is some constant, $\mathbf{x}^1=\mathbf{x}_k + D_k \tau$ and $\mathbf{x}^2=\mathbf{x}_k + D_k (\tau+\alpha e_j)$. So we have the following expression for Lipschitz constant of a gradient for $f_k$ with respect to the $j$-th component: 
\begin{equation}
\label{L_k_j}
    L_k^j = L\|d_k^j\|.
\end{equation}
It is easy to see that
$$\left|\frac{d}{d\tau_j} f_k(\tau)\right|^2 \leq 2L_k^j \left(f_k(\tau) - \min_{\tau_j} f_k(\tau)\right)\leq  2L_k^j \left(f_k(\tau) - \min_{\tau} f_k(\tau)\right)=2L_k^j \delta_4$$
for all $\tau_j$. From \eqref{L_k_j}, the inequality above and the definition of $f_k$ \eqref{f_k}, we have the following expression:
\begin{equation}
\label{g_orth_delta_4}
    |\langle\nabla f(\mathbf{x}_{k+1}), d_k^j\rangle|\leq \sqrt{2L \|\mathbf{d}_k^j\| \delta_4}.
\end{equation}

It means if we choose $\delta_2>0$ in the following way:
$$\delta_2 \leq \frac{1}{k^2}\sqrt{2L \max_{k}\|\mathbf{d}_k^3\|\delta_4}.$$
then condition $(ii)$ in Theorem \ref{SESOP_theorem_full} meets.

In a similar way we can obtain that $f_k$ has Lipschitz continuous gradient with constant $L_k$:
$$L_k = \|D_k\| L$$
and
$$\left|\nabla_\tau f_k(\tau)\right|^2 \leq  2L_k \left(f_k(\tau) - \min_{\tau} f_k(\tau)\right)=2L_k \delta_4.$$
Note that
$$\mathbf{x}_{k}-\mathbf{x}_0 = D_{k-1} \tau_{k-1} + \mathbf{d}^1_{k-1} $$
Finally, we can choose $\delta_3$ in the following way:
$$\delta_3 \leq \sqrt{2L \delta_4}\left(\sqrt{\max_{k}(\|D_k\| \|\tau_k\|)} + \sqrt{\|\max_{k}\mathbf{d}^1_{k-1}\|}\right).$$
\end{proof}

\section{Numerical Experiments}\label{sect3}

In the current section we provide the results of numerical experiments. All experiments were carried out on Python 3.7.3 on computer Acer Swift 5 SF514-55TA-56B6 with processor Intel(R) Core(TM) i5-8250U @ CPU 1.60GHz, 1800 MHz.

All experiments were carried out in the assumption that we can solve the subspace optimization problem at each iteration with some accuracy on function. For this, we used the quadratic test function
$$f(\mathbf{x})= \mathbf{x}^\top A \mathbf{x} + 2\mathbf{b}^\top x$$
with $A\in \mathbb{S}^n_+$ ($A$ is a symmetric positive semidefinite matrix), $\mathbf{b}\in\mathbf{R}^n$. Obviously, this function is convex and consequently $1$-quasar-convex. The components of parameter $\mathbf{b}$ were generated randomly i.i.d. from uniform distribituion $\mathcal{U}([-1,1])$. The matrix $A=B^\top B$ where components $B\in\mathbb{R}^{n\times n}$ were generated by the same way as for vector $\mathbf{b}$.

The shift $\tau_k$ can be found as a solution of convex quadratic optimization problem:
$$\min_{\tau\in\mathbf{R}^3} \tau^\top D_k^\top A D_k\tau - 2 \left( \mathbf{b} + A \mathbf{x}_k\right)^\top D_k \tau $$
with any accuracy that we will vary in our experiments (see details below). The Lipschitz constant $L$ of $\nabla f$ is also known and equals the maximal singular value of matrix $A$. For all experiments we take dimension $n=500$.

\begin{figure}[ht!]  
\vspace{-4ex} \centering \subfigure{
\includegraphics[width=0.45\linewidth]{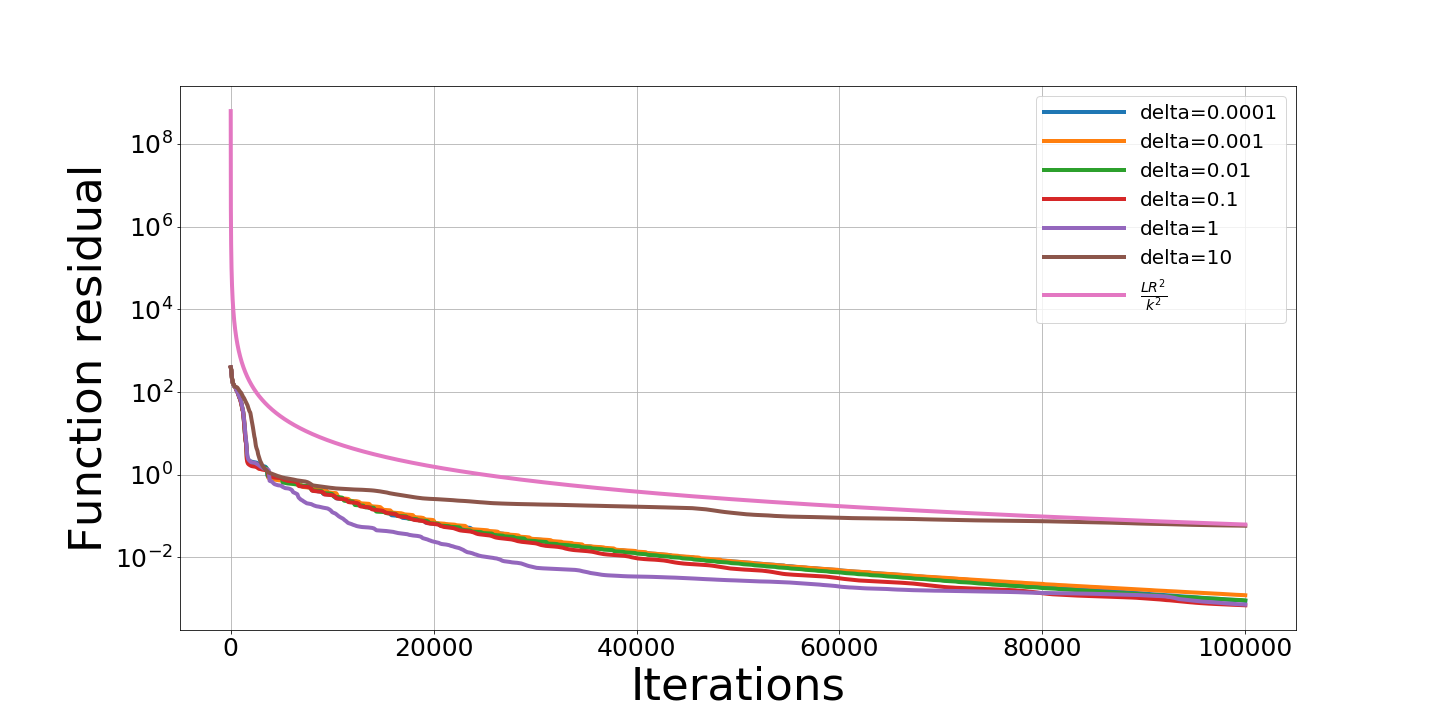} \label{fig:grad_exp_f_N} }  
\hspace{4ex}
\subfigure{
\includegraphics[width=0.45\linewidth]{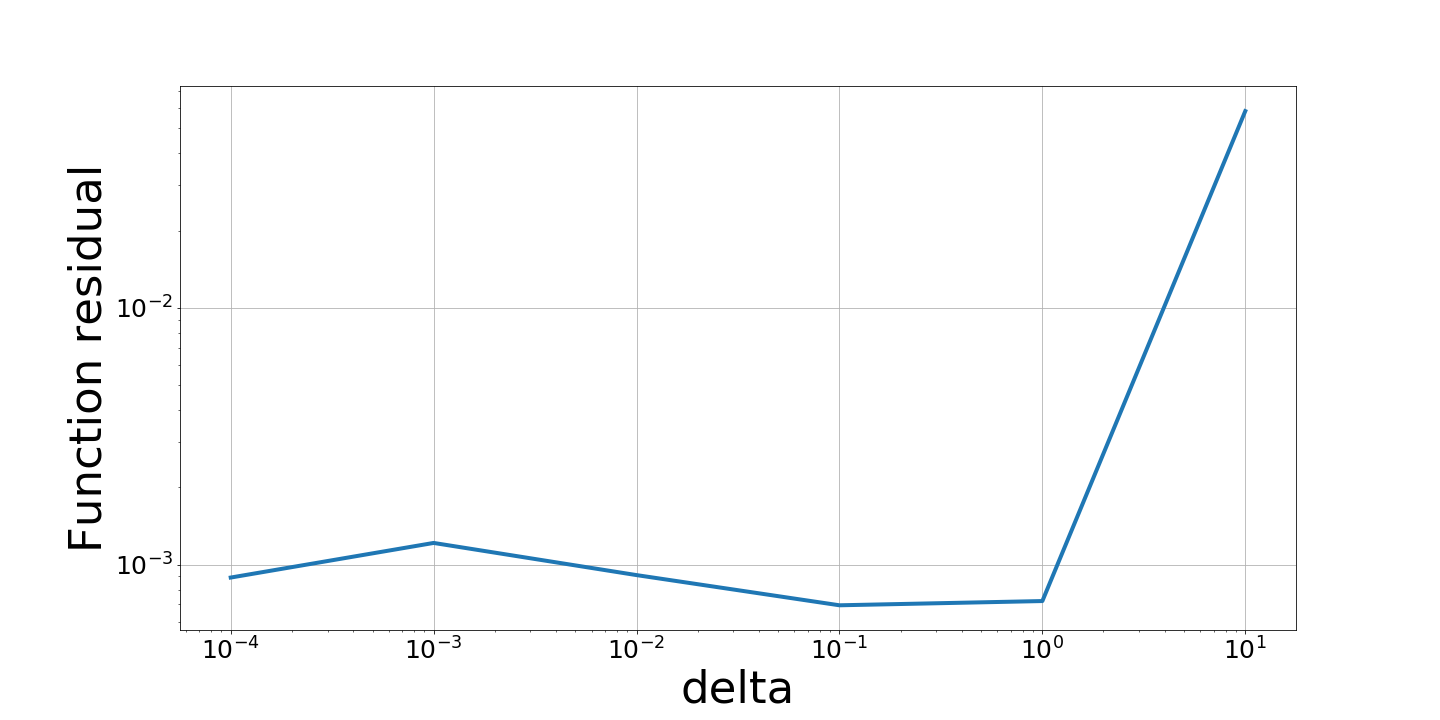} \label{fig:grad_exp_f_delta} }
\caption{The dependencies of convergence on gradient inexactness in the case of an exact solution of the subspace optimization problem: \subref{fig:grad_exp_f_N} convergence for different $\delta_1$; \subref{fig:grad_exp_f_delta}  minimal values  found for different $\delta_1$.} \label{fig:res_1}
\end{figure}

The first experiment compares the theoretical estimation from Theorem \ref{SESOP_theorem} and the real experiment in the case of inexactness in the gradient only.  It means that we solve quadratic optimization problem with machine accuracy that is significantly less than inexactness in the gradient. The inexact gradient will be given as an usual gradient with some noise
$g(\mathbf{x})= \nabla f(\mathbf{x}) + \delta \xi(\mathbf{x}),$
where $\xi(\mathbf{x})\sim\mathcal{U}\left(S_1(0)\right)$ is a random vector from the unit sphere with uniform distribution. Obviously such a vector meets the conditions of Theorem \ref{SESOP_theorem}. The results of this experiment are presented in Figure \ref{fig:res_1}.

We can see in Figure \ref{fig:grad_exp_f_delta} that the convergence of the proposed variant of the SESOP method (Algorithm \ref{alg:sesop}) at the first 100000 iterations is better than the theoretical convergence (the line $\frac{LR^2}{k^2}$ on graph) without noise for any gradient inexactness for $\delta\in\left[10^{-4}, 10\right]$. Moreover, in Figure \ref{fig:grad_exp_f_delta} the dependence of the function residual on the gradient inexactness shows that there is no significant error accumulation for $\delta<1$ at the first 100000 iterations. Such an optimistic result was obtained by Algorithm \ref{alg:sesop} due to the exact solution of the low-dimensional optimization subproblems \eqref{eqsubproblem}.

\begin{figure}[ht!]  
\vspace{-4ex} \centering \subfigure{
\includegraphics[width=0.45\linewidth]{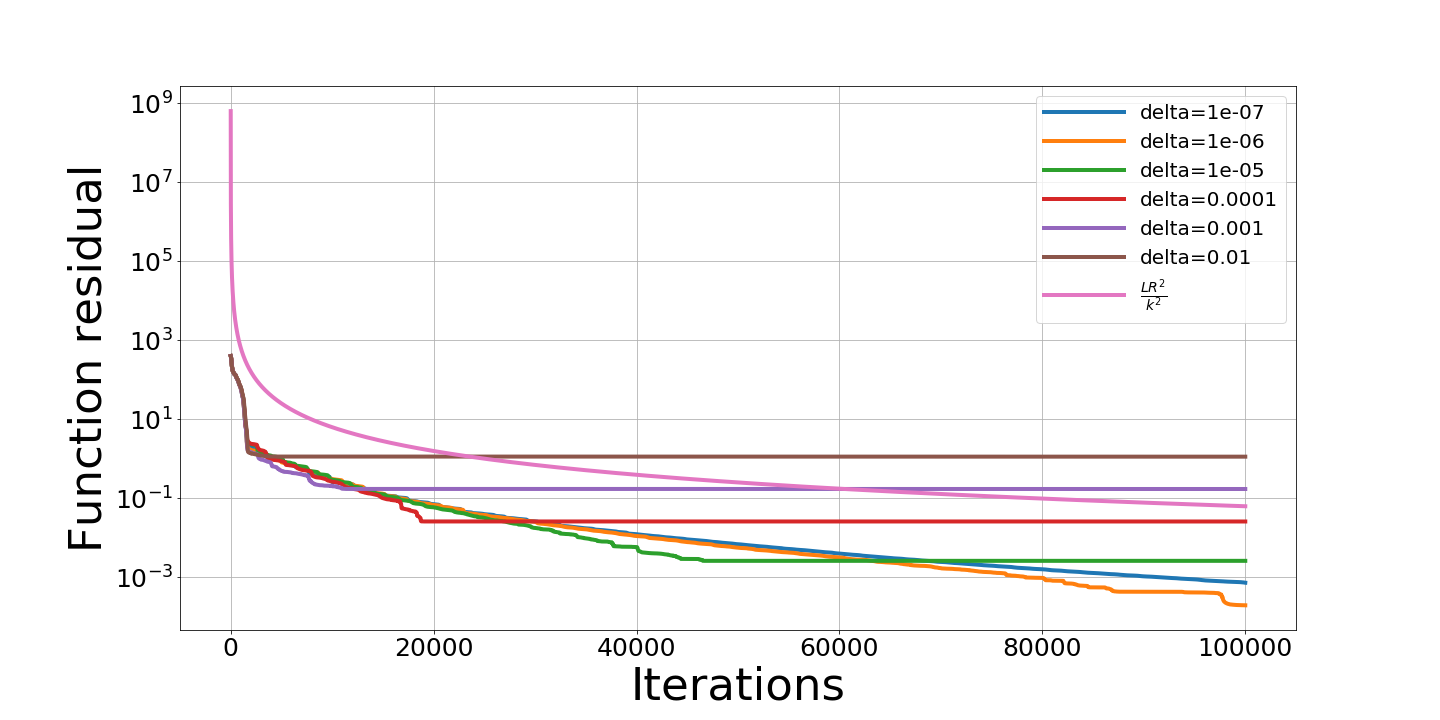} \label{fig:argmin_exp_f_N} }  
\hspace{4ex}
\subfigure{
\includegraphics[width=0.45\linewidth]{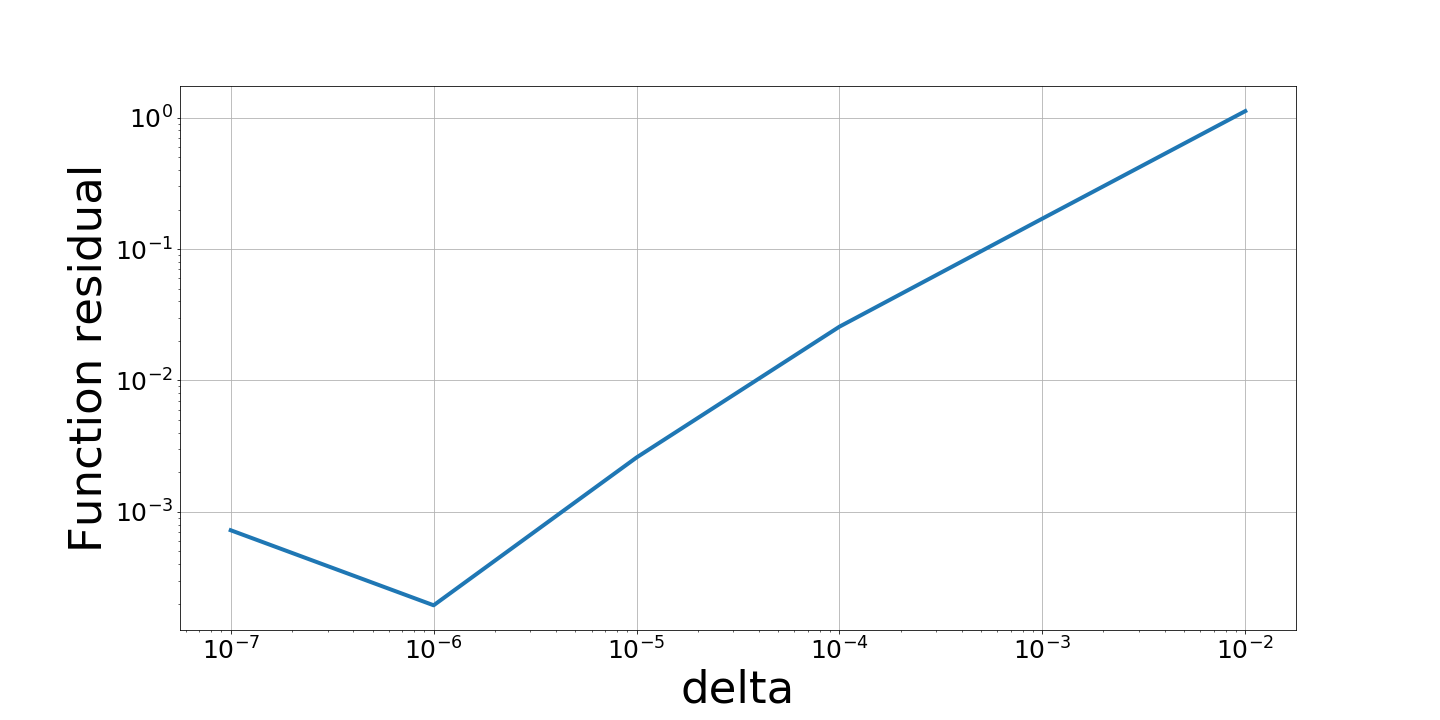} \label{fig:argmin_exp_f_delta} }
\caption{The dependencies of convergence on the inexactness of the subspace optimization problem in Algorithm \ref{alg:sesop}: \subref{fig:argmin_exp_f_N} convergence for different $\delta_4$; \subref{fig:argmin_exp_f_delta}  minimal values found for different $\delta_4$.} \label{fig:res_2}
\end{figure}

In the second experiment we studied the practical convergence  rate for different inexactness $\delta_j,j=\overline{2,4}$ when $\delta_1$ is fixed. In this experiments we take $\delta_1=10^{-3}$. Even in the ideal case, we cannot estimate the dependence of convergence on these parameters independently because when inexactness on the function of the subspace optimization problem \eqref{eqsubproblem} solution is small enough $\delta_4\rightarrow 0$, then other inexactness also tends to zero. We varied the inexactness of subspace optimization solution $\delta_4$. The results of the second experiment  are shown in Figure \ref{fig:res_2}.

In this case in Figure \ref{fig:argmin_exp_f_N} we can see that the convergence is significantly better than the theoretical estimation only for accuracy values $\delta=10^{-7},10^{-6},10^{-5}$. For values $10^{-2},10^{-3}$ there is no improvement after 20000 iterations and the theoretical estimation obtains better convergence. For value $10^{-4}$ the convergence stopped after 20000 iterations too but the theoretical convergence is not better due to a small number of iterations. In the figure \ref{fig:argmin_exp_f_delta} we can see that approached function value degrades with the linear rate depending on $\delta\geq10^{-6}$, which corresponds to the results of Theorem \ref{SESOP_theorem_full}. So, the proposed modification of the SESOP method is more sensitive to the accuracy of subproblem solution \eqref{eqsubproblem} than to the inexactness of the gradient.

Finally, we want to compare Algorithm \ref{alg:sesop} with an inexact gradient with another method that can work with gradient inexactness. We choice the known Similar Triangles Method (STM) with gradient inexactness from \cite{Vasin2021}. Similar to the previous experiment, we will consider two cases: the case of inexactness only in the gradient and the case of fixed additive gradient inexactness when subspace optimization is being solved inexactly too.

\begin{figure}[ht!]  
\vspace{-4ex} \centering \subfigure{
\includegraphics[width=0.45\linewidth]{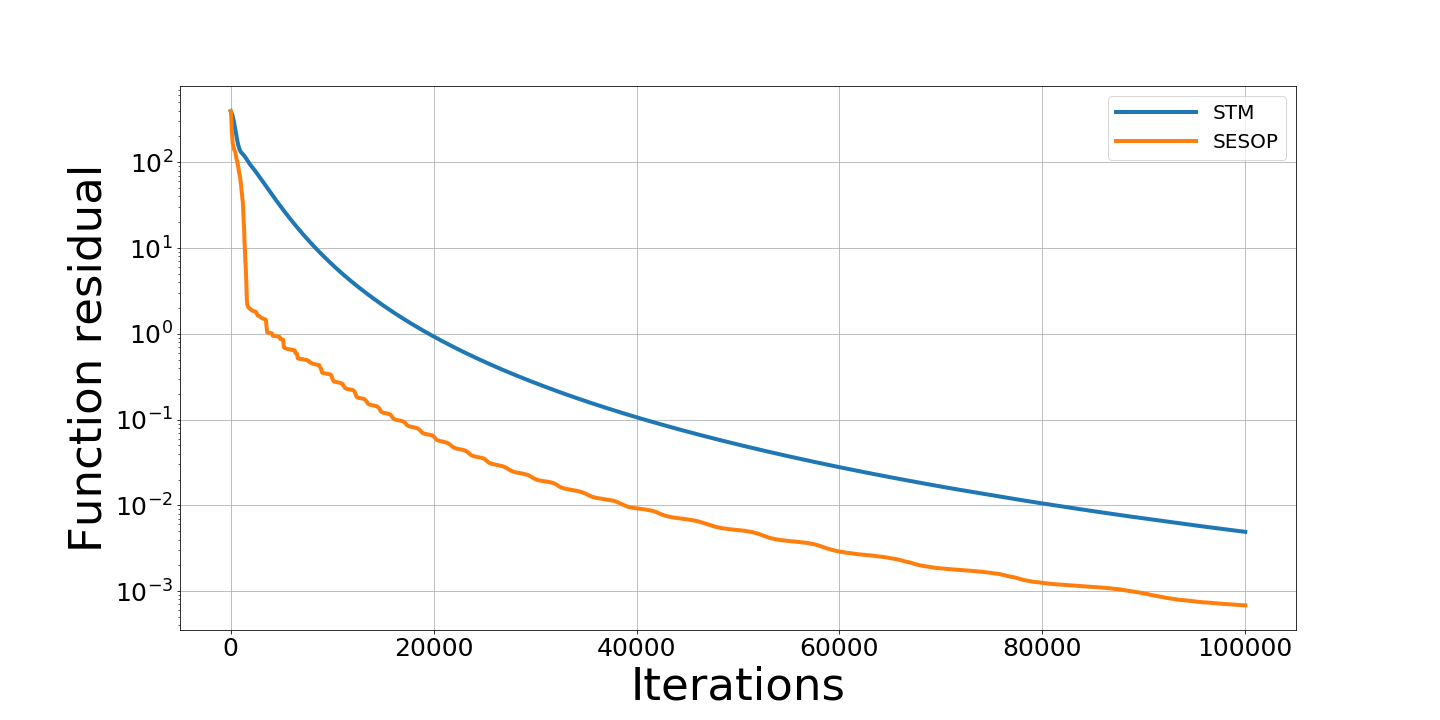} \label{fig:grad_SESOP_STM_1} }  
\hspace{4ex}
\subfigure{
\includegraphics[width=0.45\linewidth]{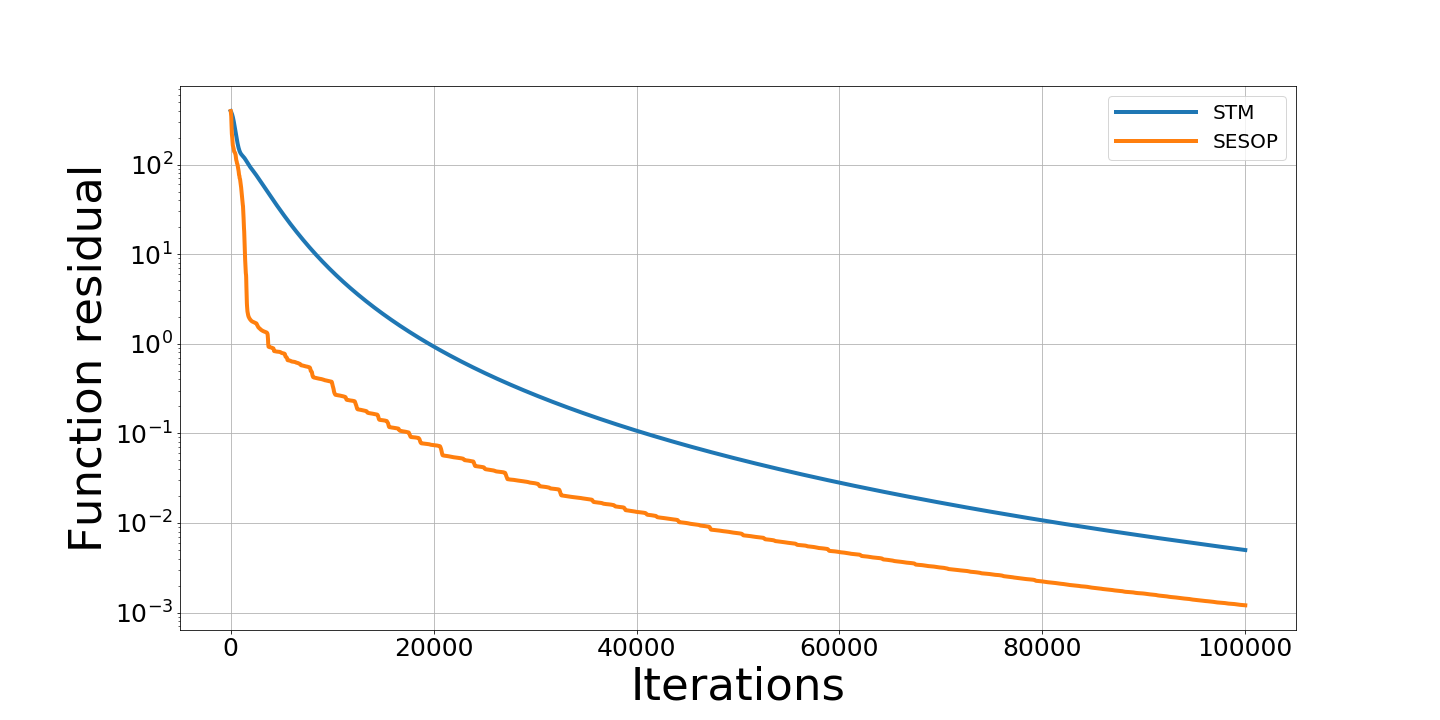} \label{fig:grad_SESOP_STM_2} }
\vspace{4ex}
\subfigure{
\includegraphics[width=0.45\linewidth]{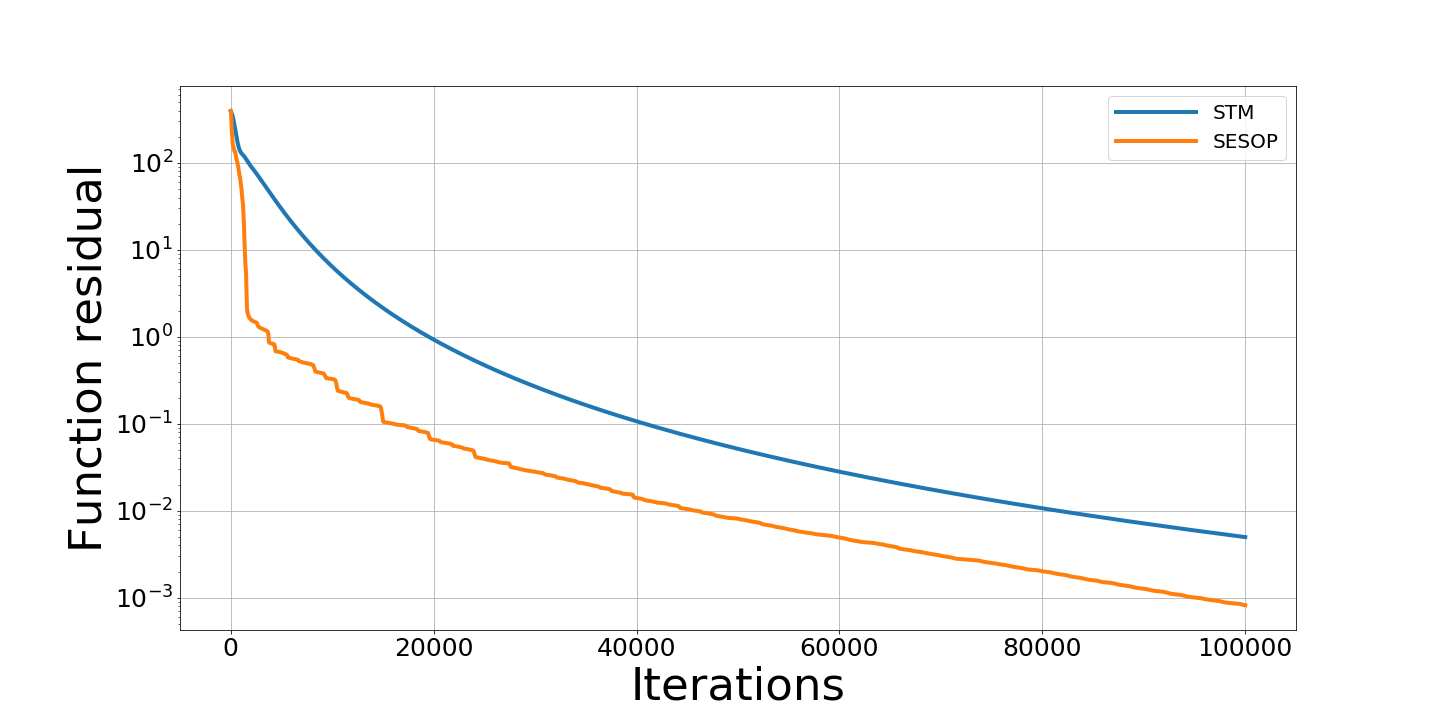} \label{fig:grad_SESOP_STM_3} }
\caption{The convergence of SESOP and STM for different additive noise with an exact solution of  the subspace optimization problem: \subref{fig:grad_SESOP_STM_1} $\delta_1=0.001$; \subref{fig:grad_SESOP_STM_2} $\delta_1=0.00001$; \subref{fig:grad_SESOP_STM_3} $\delta_1=0.1$.} \label{fig:res_3}
\end{figure}

The results for the first case for different values $\delta_1$ are presented in Figure \ref{fig:res_3}. We can see that because of the exact solution of the subspace optimization problem Algorithm \ref{alg:sesop} is almost everywhere better than the STM \cite{Vasin2021} with inexact gradient.

\begin{figure}[H]
    \centering
    \includegraphics[width=0.90\linewidth]{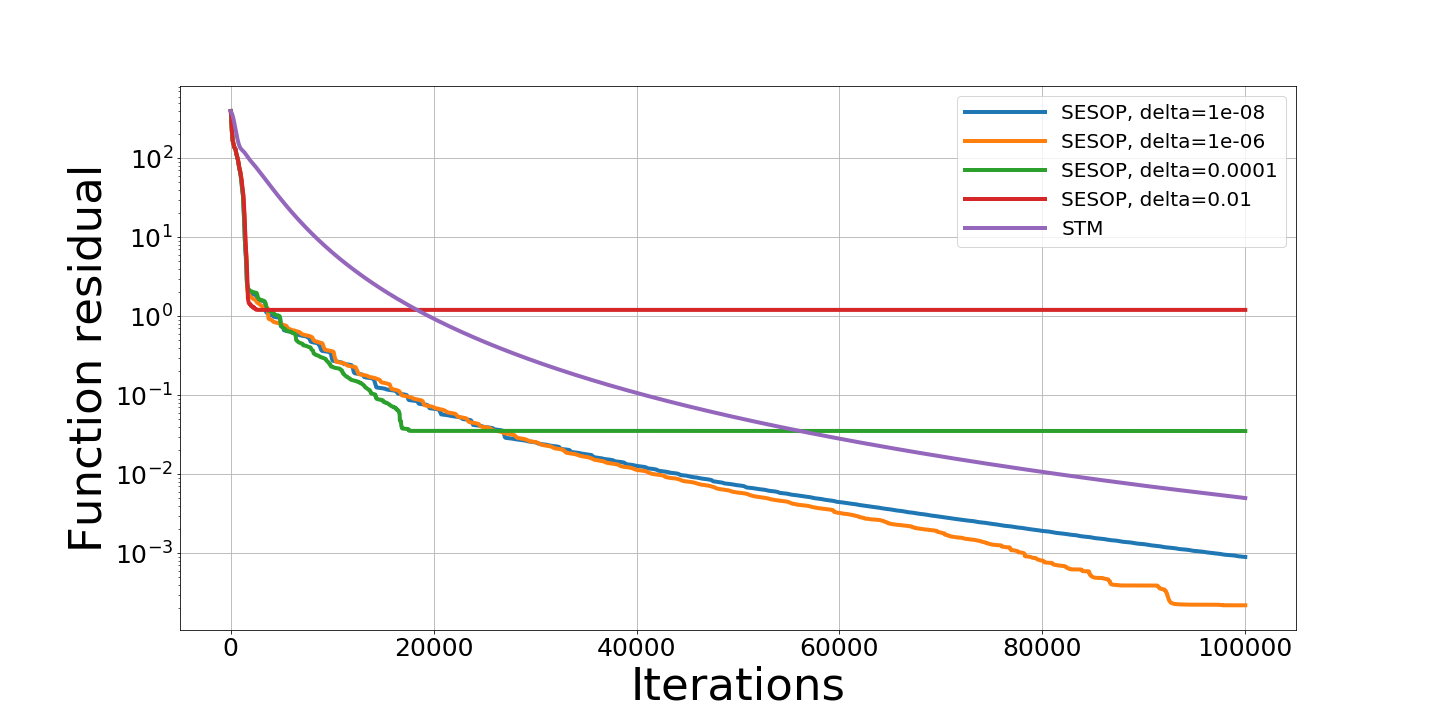}
    \caption{The convergence of Algorithm \ref{alg:sesop} with an inexact solution of the subspace optimization problem and STM for additive noise $\delta_1=10^{-3}$.}
    \label{fig:res_4}
\end{figure}

The results for the second case for different accuracy of the subspace problem solution are presented in Figure \ref{fig:res_4}. There is a natural result that for enough exact solution at each iteration, Algorithm \ref{alg:sesop} stays better than the STM. Nevertheless, for the inexactness in the low-dimensional subproblems solution larger or equal to $10^{-4}$ the STM becomes better than the provided method (Algorithm \ref{alg:sesop}).

\section*{Conclusion}

The contributions of the paper can be summarized as follows:

\begin{itemize}
	\item We propose one modification of the Sequential Subspace Optimization Method \cite{SESOP_2005} with a $\delta$-additive noise  in the gradient \eqref{InexactGrad}. For the first time, the result was obtained describing the influence of this inexactness on the estimate of the convergence rate, whereby the quantity $O(\delta \max_k \|x_k - x^*\|)$ is replaced by the constant $O(\delta \|x_0 - x^*\|)$, $\|x_0 - x^*\| \leq \max_k \|x_k - x^*\|$ .
	\item The influence of inexactness in solving auxiliary minimization problems \eqref{eqsubproblem} to the general theoretical estimate for Algorithm \eqref{alg:sesop} is investigated.
	\item We provide numerical experiments which demonstrate the effectiveness of the proposed approach in this paper. Algorithm \ref{alg:sesop} is compared with another known Similar Triangles Method (STM) with an additive gradient noise.

\end{itemize}

In the further works, we plan to continue the analysis of error accumulation in other methods for non-convex $\gamma$-quasar-convex functions. It is planned to develop some methods with auxiliary subproblems of dimension less than 3. In particular, we are going to consider the Conjugate Gradients method considered in \cite{NemirovskyYudin,AccWQC} and near-optimal methods from work \cite{NearOpt}. 

The authors are grateful to Alexander Gasnikov and Mohammad Alkousa for very useful discussions.


\newpage
\appendix

\section{Estimation for the sum of $w_k$}\label{AppendA}
We can prove the following estimations, which will be useful for the main results proved in this paper.

\begin{lemma}
\label{w_est_lemma}
Let $w_i$ be defined by the formula $\omega_0=1, \omega_i = \frac{1}{2}+\sqrt{\frac{1}{4}+\omega_{i-1}^2}.$

Then for any $T\in\mathbb{N}_0$ and for any $j, 0\leq j\leq T-1$, the following expressions hold:
\begin{equation}
    \label{w_k_0_T}
    \sum\limits_{k=0}^T w_k \leq \frac{1}{2}(T+2)(T+1),
\end{equation}
\begin{equation}
    \label{w_k_j_T}
    \sum\limits_{k=j+1}^T w_k \leq \frac{1}{2}T(T+3),
\end{equation}
\begin{equation}
    \label{j2_w_k}
    \sum\limits_{k=1}^T w_k k^2 \leq  \frac{7}{12}(T+1)^4,
\end{equation}
\begin{equation}
    \label{w_k2_T}
    \sum\limits_{k=0}^T w_k^2 \leq  \frac{1}{3}(T+1)^3.
\end{equation}
\end{lemma}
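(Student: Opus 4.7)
My plan is to prove all four bounds by reducing each to an elementary polynomial calculation, relying on the upper estimate $\omega_k \leq k+1$ from \eqref{w_k_est} for \eqref{w_k_0_T}--\eqref{j2_w_k}, and pairing that estimate with an algebraic identity derived from the recurrence for \eqref{w_k2_T}. The latter identity is the only non-routine ingredient.

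For \eqref{w_k_0_T} and \eqref{w_k_j_T} I would substitute $\omega_k \leq k+1$ termwise and then sum the resulting arithmetic progression. This gives $\sum_{k=0}^{T}\omega_k \leq \sum_{k=0}^{T}(k+1) = \tfrac{(T+1)(T+2)}{2}$ at once. For the tail sum, the bound $\sum_{k=j+1}^{T}(k+1)$ is decreasing in $j$, so the worst case $j=0$ yields $\tfrac{T(T+1)}{2}+T=\tfrac{T(T+3)}{2}$, which is a uniform upper bound valid for all admissible $j$. For \eqref{j2_w_k} the same substitution gives $\sum_{k=1}^T \omega_k k^2 \leq \sum_{k=1}^T k^3 + \sum_{k=1}^T k^2$; plugging in the closed forms and factoring leads to $\tfrac{T(T+1)(3T+1)(T+2)}{12}$, and the target inequality reduces to $7(T+1)^3 - T(3T+1)(T+2) \geq 0$, a cubic in $T$ that expands to $4T^3+14T^2+19T+7$, which is manifestly non-negative.

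For \eqref{w_k2_T} the naive bound $\omega_k^2 \leq (k+1)^2$ is too loose, since it generates a cubic with the wrong leading constant. The trick is to exploit the recurrence more carefully: squaring $\omega_k - \tfrac12 = \sqrt{\tfrac14 + \omega_{k-1}^2}$ gives $\omega_k^2 - \omega_k = \omega_{k-1}^2$, i.e.\ $\omega_k^2 - \omega_{k-1}^2 = \omega_k$. Telescoping from $0$ to $k$ then produces the clean identity $\omega_k^2 = \sum_{i=0}^{k}\omega_i$. Summing this over $k$ and swapping the order of summation yields $\sum_{k=0}^{T}\omega_k^2 = \sum_{i=0}^{T}(T+1-i)\omega_i$; substituting $\omega_i \leq i+1$ and evaluating the resulting double-indexed sum collapses to $\tfrac{(T+1)(T+2)(T+3)}{6}$, and the target $\tfrac{(T+1)^3}{3}$ follows from the polynomial inequality $(T+2)(T+3) \leq 2(T+1)^2$, which reduces to $T^2 - T - 4 \geq 0$ and holds for $T$ sufficiently large.

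The main obstacle is exactly this last reduction: the derivation of the telescoping identity $\omega_k^2 = \sum_{i=0}^k \omega_i$ is the only step that does not proceed by pure substitution of the linear bound on $\omega_k$, and it is essential for matching the stated constant $\tfrac13$. Everything else amounts to verifying elementary polynomial inequalities, which I would dispatch by simple expansion without dwelling on them.
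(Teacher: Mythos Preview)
For \eqref{w_k_0_T}, \eqref{w_k_j_T}, and \eqref{j2_w_k} your approach coincides with the paper's: substitute $\omega_k \leq k+1$ and sum. The only cosmetic difference is that the paper bounds $\sum_{k=1}^T (k+1)k^2$ by $\int_0^{T+1}(x^3+x^2)\,dx$ whereas you use the closed-form sums; both routes are valid.

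For \eqref{w_k2_T} the situation is more interesting. You correctly observe that the naive substitution $\omega_k^2 \leq (k+1)^2$ is too loose, though your stated reason is slightly off: the leading term of $\sum_{k=0}^T(k+1)^2$ is $\tfrac13 T^3$, matching the target; the failure is in the lower-order terms, since $\sum_{k=0}^T(k+1)^2 = \tfrac{(T+1)(T+2)(2T+3)}{6} > \tfrac{(T+1)^3}{3}$ for every $T\geq 0$. The paper's own argument for \eqref{w_k2_T} in fact invokes exactly this naive bound followed by the integral comparison $\sum_{k=0}^T(k+1)^2 \leq \int_0^{T+1} x^2\,dx$, and that inequality is simply false. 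Moreover, the stated bound \eqref{w_k2_T} is itself false for $T\in\{0,1\}$ (already $\omega_0^2=1>\tfrac13$), so no argument can establish it over all of $\mathbb{N}_0$ as written. Your telescoping identity $\omega_k^2 = \sum_{i=0}^k\omega_i$ is a genuine refinement over the paper's approach and yields $\sum_{k=0}^T\omega_k^2 \leq \tfrac{(T+1)(T+2)(T+3)}{6}$, which does meet the target for all $T\geq 3$; your caveat ``for $T$ sufficiently large'' is therefore not a gap in your reasoning but a reflection of the fact that the lemma overstates its range of validity.
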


\begin{proof}
Using the upper bound \eqref{w_k_est} for $w_k$ we can estimate the left parts of \eqref{w_k_j_T} and \eqref{w_k_0_T} by the sum of arithmetic progression:
    $$\sum\limits_{k=j}^T w_k \leq \sum\limits_{k=j}^T (k+1) \leq \frac{1}{2}(T+j+2)(T-j+1).$$
For $j=0$ we have estimate \eqref{w_k_0_T}. Maximizing by $j$ on the segment $[1,T]$ we have the estimate \eqref{w_k_j_T}.

To get inequality \eqref{j2_w_k} we will use the estimation by integral of a monotonic function
$$    \sum\limits_{k=1}^T w_k k^2\leq \sum\limits_{k=1}^T (k+1)k^2 \leq \int\limits_{0}^{T+1} (k^3+k^2)dk = \frac{1}{4}(T+1)^4 + \frac{1}{3}(T+1)^3\leq \frac{7}{12}(T+1)^4.
$$
In a similar way we can obtain the last inequality
$$    \sum\limits_{k=0}^T w_k^2\leq \sum\limits_{k=0}^T (k+1)^2 \leq \int\limits_{0}^{T+1} k^2 dk = \frac{1}{3}(T+1)^3.
$$
\end{proof}

\section{Technical Lemma for Theorem \ref{SESOP_theorem_full}}

In this section, we propose a generalization of the proof of Lemma \ref{a_lemma_w} for the case when the additional problem was solved inexactly. In this case there is no orthogonality between $\nabla f(\mathbf{x}_T)$ and $\mathbf{d}_{T-1}^2$ was used in the proof of Lemma \ref{a_lemma_w}. Additionally, in this proof we made more accurate work with constants which gave more accurate estimates in the Theorem \ref{SESOP_theorem_full}.

\begin{lemma}
\label{a_lemma_w_full}
Let the inexact gradient meet to condition \eqref{grad_orth_i}.
Let $\{\mathbf{x}_j\}_j$ be a sequence of points generated by Algorithm \ref{alg:sesop} with conditions from Theorem \ref{SESOP_theorem_full}. Then the following condition is met:

\begin{equation}
    \left\|\sum\limits_{k=0}^{T} \omega_k g(\mathbf{x}_k)\right\|^2 \leq  2 \sum\limits_{j=0}^T \omega_j^2 \|g(\mathbf{x}_j)\|^2 + 2\left(5T^4 +21T^3 + 17T^2\right)\delta_1^2 +\frac{13}{6}(T+2)^4 \delta_2
\end{equation}
for all $T\geq 1$.
\end{lemma}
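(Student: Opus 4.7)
The plan is to follow the template of the proof of Lemma \ref{a_lemma_w}, but with two essential modifications. First, the exact orthogonality $\nabla f(\mathbf{x}_T) \perp \mathbf{d}_{T-1}^2$ that held in the previous setting must be replaced by condition (ii) of Theorem \ref{SESOP_theorem_full}, namely $|\langle \nabla f(\mathbf{x}_k), \mathbf{d}_{k-1}^2\rangle| \leq k^2\delta_2$, introducing an extra $\delta_2$-dependent slack at every step. Second, since the final theorem tracks a refined coefficient on $\delta_1^2$ (of the form $2(5T^4+21T^3+17T^2)$ rather than $72T^4$), the estimates must be run through the sharper polynomial sum bounds of Lemma \ref{w_est_lemma} rather than the crude $\sum\omega_k\leq T(T+3)/2$ type simplifications used in Lemma \ref{a_lemma_w}.

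Concretely, I would set $W_T = \|\sum_{k=0}^T \omega_k g(\mathbf{x}_k)\|$ and expand $W_T^2 = \omega_T^2\|g(\mathbf{x}_T)\|^2 + 2\omega_T\langle g(\mathbf{x}_T), \mathbf{d}_{T-1}^2\rangle + W_{T-1}^2$, as in the exact case. The new inner-product bound becomes, via triangle inequality and Cauchy--Schwarz,
$$|\langle g(\mathbf{x}_j), \mathbf{d}_{j-1}^2\rangle| \leq |\langle \nabla f(\mathbf{x}_j), \mathbf{d}_{j-1}^2\rangle| + \|g(\mathbf{x}_j) - \nabla f(\mathbf{x}_j)\|\cdot W_{j-1} \leq j^2\delta_2 + \delta_1 W_{j-1}.$$
Telescoping the recursion then produces an inequality of the form
$$W_T^2 \leq \sum_{j=0}^T \omega_j^2\|g(\mathbf{x}_j)\|^2 + 2\delta_1 \sum_{j=1}^T \omega_j W_{j-1} + 2\delta_2 \sum_{j=1}^T \omega_j j^2,$$
where the new last term is controlled by the bound $\sum_{j=1}^T \omega_j j^2 \leq \frac{7}{12}(T+1)^4$ from Lemma \ref{w_est_lemma}.

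To eliminate the $W_{j-1}$ factors from the middle sum, I would rederive the backward comparison inequality $W_j^2 \geq -2\omega_j(\delta_1 W_{j-1} + j^2\delta_2) + W_{j-1}^2$ via the same Cauchy--Schwarz step applied to the expansion of $W_T^2$, solve this quadratic in $W_{j-1}$, and use $\sqrt{a+b+c} \leq \sqrt{a}+\sqrt{b}+\sqrt{c}$ to obtain a linear one-step bound of the form $W_{j-1} \leq 2\omega_j\delta_1 + \omega_j j\sqrt{2\delta_2} + W_j$. Telescoping this from $j{-}1$ up to $T$ and applying the sum bounds \eqref{w_k_0_T} and \eqref{j2_w_k} yields $W_{j-1} \leq c_1 T^2\delta_1 + c_2(T+1)^3\sqrt{\delta_2} + W_T$ with explicit constants. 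Substituting this back, combined again with the sum bound $\sum\omega_j \leq \frac{1}{2}(T+2)(T+1)$, produces a quadratic inequality in $W_T$ alone. Solving it and then squaring using $(a+b+c)^2 \leq 2(a+b)^2 + 2c^2$ and $(a+b)^2 \leq 2a^2+2b^2$ gives the claimed bound. The main obstacle is purely arithmetic bookkeeping: the coefficients $2(5T^4+21T^3+17T^2)$ on $\delta_1^2$ and $\frac{13}{6}(T+2)^4$ on $\delta_2$ are sharp enough that one must use the exact polynomial sum estimates \eqref{w_k_0_T}--\eqref{j2_w_k} and be careful not to overcount factors of $2$ when passing through the nested square roots and the final quadratic solve.
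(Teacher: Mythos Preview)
Your plan is structurally identical to the paper's proof: define $W_T$, expand $W_T^2$ as a telescoping sum, bound the cross term by $j^2\delta_2 + \delta_1 W_{j-1}$ to get the forward inequality \eqref{up_W_est_full}, derive a backward one-step bound on $W_{j-1}$ from the same expansion, telescope that bound up to $W_T$, substitute back, and solve the resulting quadratic in $W_T$. The sums are controlled with the same estimates \eqref{w_k_0_T}--\eqref{j2_w_k}. So the approach is the right one.

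The one point where your bookkeeping drifts from the paper is the backward step. Solving $W_{j-1}^2 - 2\omega_j\delta_1 W_{j-1} \leq W_j^2 + 2\omega_j j^2\delta_2$ actually gives
\[
W_{j-1} \;\leq\; 2\omega_j\delta_1 + j\sqrt{2\omega_j}\,\sqrt{\delta_2} + W_j,
\]
not $2\omega_j\delta_1 + \omega_j j\sqrt{2\delta_2} + W_j$ as you wrote (the $\omega_j$ sits under the square root, not outside). More importantly, the paper records the backward inequality as $W_j^2 \geq -j^2\delta_2 - 2\omega_j\delta_1 W_{j-1} + W_{j-1}^2$, i.e.\ without the $2\omega_j$ factor on the $\delta_2$ term, and therefore obtains the cleaner step $W_{j-1}\leq 2\omega_j\delta_1 + j\sqrt{\delta_2} + W_j$. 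This matters for the final powers of $T$: with the paper's version, $\sum_{k=j+1}^T k\sqrt{\delta_2}$ telescopes to order $T^2$, whereas keeping your $\sqrt{\omega_j}$ (or $\omega_j$) factor pushes this to order $T^{5/2}$ (or $T^3$), and after substitution the cross term $\delta_1\sqrt{\delta_2}$ then carries a power of $T$ that cannot be split by AM--GM into $O(T^4)\delta_1^2$ and $O(T^4)\delta_2$ simultaneously. To land on the stated $\tfrac{13}{6}(T+2)^4$ coefficient you should follow the paper's simplification in the backward step. (Note also that the paper's own proof ends with the slightly looser constants $2(5T^4+28T^3+39T^2)$ and $\tfrac{13}{6}(T+3)^4$, so the exact coefficients in the lemma statement should not be taken too literally.)
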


\begin{proof}
Let us define $W_{T} = \left\|\sum\limits_{k=0}^{T} \omega_k g(\mathbf{x}_k)\right\|$,  $\mathbf{d}^2_j=\sum\limits_{k=0}^{j} \omega_k g(\mathbf{x}_k)$. Note that we have the following equality for this value:
$$ W_T^2 = \omega_T^2 \|g(\mathbf{x}_T)\|^2 + 2\left\langle w_T g(\mathbf{x}_T), \mathbf{d}^2_{T-1}\right\rangle + W_{T-1}^2.$$

Obviously, $$\frac{d}{d\tau_3} f(x_{j-1}+\tau D_{j-1}) = \left\langle\nabla f(\mathbf{x}_j),  \mathbf{d}^2_{j-1}\right\rangle.$$ From \eqref{grad_orth_i} we have the following correlations:
\begin{equation*}
    |\left\langle g(\mathbf{x}_j), \ \mathbf{d}^2_{j-1}\right\rangle| = |\left\langle 
    \nabla f(\mathbf{x}_j),  \mathbf{d}^2_{j-1}\right\rangle| + |\left\langle g(\mathbf{x}_j) - \nabla f(\mathbf{x}_j),  \mathbf{d}^2_{j-1}\right\rangle|
\end{equation*}
and
\begin{equation*}
    |\left\langle g(\mathbf{x}_j), \ \mathbf{d}^2_{j-1}\right\rangle|  \leq j^2\delta_2 + \delta_1 W_{j-1}
\end{equation*}
for all $j\geq 1.$ So we have the following upper estimate for $W_T$:
\begin{equation}
        \label{up_W_est_full}
    \begin{aligned}
     W_T^2 &= \sum\limits_{j=0}^T \omega_j^2 \|g(\mathbf{x}_j)\|^2 + 2\sum\limits_{j=1}^T w_j\left\langle  g(\mathbf{x}_j), \mathbf{d}^2_{j-1}\right\rangle    \\
&\leq\sum\limits_{j=0}^T \omega_j^2 \|g(\mathbf{x}_j)\|^2 +2\delta_1 \sum\limits_{j=1}^T w_j W_{j-1} +  \delta_2 \sum\limits_{j=1}^T w_j j^2.   
    \end{aligned}
\end{equation}
On the other hand, we have the following inequality similar to the estimate from the proof of Lemma \ref{a_lemma_w} ($\sqrt{a + b} \leq \sqrt{a} + \sqrt{b}$ for each $a, b \geq 0$):
$$ W_j^2 \geq - j^2 \delta_2 - 2w_j \delta_1 W_{j-1} + W_{j-1}^2,$$
$$W_{j-1} \leq w_j\delta_1 + \sqrt{w_j^2 \delta_1^2 + W_j^2 + j^2 \delta_2} \leq 2w_j\delta_1 + j\sqrt{\delta_2} + W_j.$$

By induction we have the following estimate:
$$W_j \leq 2\delta_1 \sum\limits_{k=j+1}^T w_k + \sqrt{\delta_2}\sum\limits_{k=j+1}^T k + W_T\text{ for } j=\overline{0, T-1}.$$

From \eqref{w_k_j_T} we get
$$W_j \leq \delta_1 T(T+3) + \sqrt{\delta_2}(T-j)(T+j+1) + W_T\text{ for } j=\overline{0, T-1}.$$

Maximizing by $j$ the right side of the previous inequality, we get

$$W_j \leq \delta_1 T(T+3) + \sqrt{\delta_2} T(T+1) + W_T\text{ for } j=\overline{0, T-1}.$$

Let us use this inequality for \eqref{up_W_est_full} and get 
$$
W_T^2 \leq \sum\limits_{j=0}^T \omega_j^2 \|g(\mathbf{x}_j)\|^2 +\delta_1^2 C_1(T) +  2\delta_1W_T  C_2(T)  + \delta_2 C_3(T)  + 2\delta_1 \sqrt{\delta_2} C_4(T),
$$
where $C_1(T) = 2 T(T+3) \sum\limits_{j=1}^T w_j; C_2(T)=\sum\limits_{j=1}^T w_j; C_3(T)= \sum\limits_{j=1}^T w_j j^2; C_4(T)=T(T+1)\sum\limits_{j=1}^T w_j$ are functions of $T$ which will be estimated in the next steps.

Finally, we have the following inequality:

\begin{equation}
\label{quad_est_W}
    W_T^2 \leq  \sum\limits_{j=0}^T \omega_j^2 \|g(\mathbf{x}_j)\|^2 +  2\delta_1W_T  C_2(T) + \Delta,
\end{equation}
where $\Delta = \delta_1^2 \left(C_1(T)+C_4(T)\right) + \delta_2(C_3(T)+  C_4(T))$.
Solving the quadratic inequality \eqref{quad_est_W} we get the following estimate:
$$
    W_T \leq \delta_1  C_2(T) + \sqrt{\left(C_2(T)\right)^2 \delta_1^2 + \sum\limits_{j=0}^T \omega_j^2 \|g(\mathbf{x}_j)\|^2 +\Delta}.$$
Thus, we have
$$W_T^2 \leq 2 \Delta + 2 \sum\limits_{j=0}^T \omega_j^2 \|g(\mathbf{x}_j)\|^2 + 4 \left(C_2(T)\right)^2 \delta_1^2$$
and therefore
\begin{equation}
    \begin{aligned}
    W_T^2 \leq&\\ &2 \sum\limits_{j=0}^T \omega_j^2 \|g(\mathbf{x}_j)\|^2 +\\ &\left(4 \left(C_2(T)\right)^2   + 2C_1(T)+2C_4(T)\right)\delta_1^2 +\\& \left(C_3(T)+C_4(T)\right)\delta_2.
\end{aligned}
\end{equation}

When we use the definition of functions $C_j(T),j=\overline{1,4}$ and estimations \eqref{w_k_j_T}, \eqref{j2_w_k}, we have

$$W_T^2 \leq 2 \sum\limits_{j=0}^T \omega_j^2 \|g(\mathbf{x}_j)\|^2 + 2\left(5T^4 +28 T^3 + 39 T^2\right)\delta_1^2 +\frac{13}{6} (T+3)^4 \delta_2.$$

\end{proof}

\section{Proof for Theorem \ref{SESOP_theorem_full}}

\begin{proof}
By the constructing of the $\mathbf{x}_{k+1}$ we have the following inequality:
\begin{equation}
    \label{k_min_ineq_full}
    f(\mathbf{x}_{k+1}) \leq \min_{\mathbf{s}\in\mathbb{R}^3} f\left(\mathbf{x}_k + \sum\limits_{i=0}^2 s_i d_k^i\right) + \delta_4 \leq f\left(\mathbf{x}_k + s_0 g(\mathbf{x}_k)\right)  + \delta_4.
\end{equation}

On the other hand, we have 

$$f(\mathbf{y})\leq f(\mathbf{x}) + \langle\nabla f(\mathbf{x}), \mathbf{y} - \mathbf{x}\rangle + \frac{L}{2}\|\mathbf{x}-\mathbf{y}\|^2$$
for all $\mathbf{x}, \mathbf{y}\in\mathbb{R}^n$.

From condition $\eqref{g_cond_i}$ we can obtain the corresponding inequality for the inexact gradient:

$$f(\mathbf{y})\leq f(\mathbf{x}) + \langle g(\mathbf{x}), \mathbf{y} - \mathbf{x}\rangle + \frac{L}{2}\|x-y\|^2 + \delta_1 \|\mathbf{x}-\mathbf{y}\|.$$

Using the inequality $\delta_1 \|\mathbf{x}-\mathbf{y}\| = \left(\sqrt{\frac{1}{L}}\delta_1\right) \left(\sqrt{\frac{L}{1}}\|\mathbf{x}-\mathbf{y}\|\right)\leq \frac{\delta_1^2}{2L} + \frac{L}{2}\|\mathbf{x}-\mathbf{y}\|^2$ we have

\begin{equation}
    \label{g_L_ineq_full}
f(\mathbf{y})\leq f(\mathbf{x}) + \langle g(\mathbf{x}), \mathbf{y} - \mathbf{x}\rangle + L\|x-y\|^2 + \frac{\delta_1^2}{2L} .
\end{equation}

Using this inequality for the right part of \eqref{k_min_ineq_full} for $\mathbf{y}:=\mathbf{x}_k + s_0 g(\mathbf{x}_k), \mathbf{x}=\mathbf{x}_k$ we have
\begin{equation*}
    f(\mathbf{x}_{k+1}) \leq f(\mathbf{x}_k) +\left(s_0 + s_0^2 L\right)\|g(\mathbf{x}_k)\|^2 + \frac{1}{2L}\delta_1^2   + \delta_4 
\end{equation*}
for any $s_0\in\mathbb{R}$. To simplify this proof we define $\tilde{
\delta} = \delta_1^2   + 2L \delta_4 $, and the last inequation in this case will be rewritten as

\begin{equation*}
    f(\mathbf{x}_{k+1}) \leq f(\mathbf{x}_k) +\left(s_0 + s_0^2 L\right)\|g(\mathbf{x}_k)\|^2 + \frac{1}{2L} \tilde{\delta}
\end{equation*}
and
\begin{equation*}
     -\left(s_0 + s_0^2 L\right) \|g(\mathbf{x}_k)\|^2 \leq f(\mathbf{x}_k) - f(\mathbf{x}_{k+1}) + \frac{1}{2L} \tilde{\delta}.
\end{equation*}

Maximizing the left part on $s_0$ we have the following estimate for inexact gradient norm:

\begin{equation}
      \|g(\mathbf{x}_k)\|^2 \leq 4L(f(\mathbf{x}_k) - f(\mathbf{x}_{k+1})) + 2\tilde{\delta}.
\end{equation}

We have that $|\left\langle \nabla f(\mathbf{x}_k), \mathbf{x}_k - \mathbf{x}_0\right\rangle| \leq \delta_3$ for all $k>0$, according to Theorem \ref{SESOP_theorem_full} conditions. Because of it we can write

$$\langle \nabla f(\mathbf{x}_k), \mathbf{x}_k-\mathbf{x}^*\rangle \leq \langle \nabla f(\mathbf{x}_k), \mathbf{x}_0-\mathbf{x}^* \rangle +\delta_3$$

 From this and the definition of inexact gradient we have

$$f(\mathbf{x}_k)-f(\mathbf{x}^*)\leq \frac{1}{\gamma} \langle g(\mathbf{x}_k), \mathbf{x}_0-\mathbf{x}^* \rangle + \delta_1 \frac{R}{\gamma} + \delta_3$$
Further we define $\overline{\delta} = \delta_1 \frac{R}{\gamma}+\delta_3$.

Similarly to the proof of Theorem 3.1 from \cite{AccWQC} we have the following chain of statements:

\begin{equation}
\label{eps_1_full}
    \begin{aligned}
        \sum\limits_{k=0}^{T-1}\omega_k(f(\mathbf{x}_k)-f^*)&\leq \frac{1}{\gamma} \left\langle  \sum\limits_{k=0}^{T-1} \omega_k g(\mathbf{x}_k), \mathbf{x}_0-\mathbf{x}^* \right\rangle +\overline{\delta} \sum\limits_{k=0}^{T-1} \omega_k\\
        &\leq\frac{1}{\gamma} \left\|\sum\limits_{k=0}^{T-1} \omega_k g(\mathbf{x}_k)\right\|R +\overline{\delta} \sum\limits_{k=0}^{T-1} \omega_k.
    \end{aligned}
\end{equation}

According to Lemma \ref{a_lemma_w_full}  and \eqref{w_k_est} we can estimate the first multiplier

\begin{align*}
    \left\|\sum\limits_{k=0}^{T-1} \omega_k g(\mathbf{x}_k)\right\|^2 \leq \\ 2 \sum\limits_{j=0}^T \omega_j^2 \|g(\mathbf{x}_j)\|^2 + 2\left(5T^4 +28 T^3 + 39 T^2\right)\delta_1^2 +\frac{13}{6} (T+3)^4 \delta_2  \leq \\
    8L \sum\limits_{k=0}^{T-1} \omega_k^2  (f(\mathbf{x}_k)-f(\mathbf{x}_{k+1})) + \Delta,
\end{align*}
where $\Delta = 2\left(5T^4 +28 T^3 + 39 T^2\right)\delta_1^2 +\frac{13}{6} (T+3)^4 \delta_2  + \frac{4}{3}(T+1)^3\tilde{\delta}.$ The coefficient before $\tilde{\delta}$ is obtained from estimation \eqref{w_k2_T}.

Note that the choice of $\omega_k$ is equivalent to choosing the greatest $\omega_k$ satisfying
$$\omega_k = \begin{cases}1, \text{if $k=0$},\\ \omega_k^2-\omega_{k-1}^2, \text{ptherwise}.\end{cases}$$

Now we can estimate the left part of \eqref{eps_1_full} denoting $\varepsilon_k=f(\mathbf{x}_k)-f^*$ by the following way:

\begin{equation*}
    \begin{aligned}
        S &= \sum\limits_{k=0}^{T-1}\omega_k \varepsilon_k  \\
        &\leq\frac{1}{\gamma} \left\|\sum\limits_{k=0}^{T-1} \omega_k g(\mathbf{x}_k)\right\|R +\overline{\delta} \sum\limits_{k=0}^{T-1} \omega_k\\
         & \leq\left(\frac{8LR^2}{\gamma^2} \sum\limits_{k=0}^{T-1} \omega_k^2 (\varepsilon_k-\varepsilon_{k+1}) + \Delta \right)^{\frac{1}{2}} +\overline{\delta} \sum\limits_{k=0}^{T-1} \omega_k\\
      &\leq\left(\frac{8LR^2}{\gamma^2} \sum\limits_{k=0}^{T-1} \omega_k^2 (\varepsilon_k-\varepsilon_{k+1})\right)^{\frac{1}{2}} + \sqrt{\Delta} +\overline{\delta} \sum\limits_{k=0}^{T-1} \omega_k\\
         &=\sqrt{\frac{8LR^2}{\gamma^2}} \sqrt{S - \varepsilon_T\omega_{T-1}^2} +  \sqrt{\Delta}  +\overline{\delta} \sum\limits_{k=0}^{T-1} \omega_k.
    \end{aligned}
\end{equation*}

From the inequality above we have

\begin{equation*}
    \omega_{T-1}^2 \varepsilon_T \leq S - \frac{\gamma^2}{8LR^2}\left(S - \overline{\delta} \sum\limits_{k=0}^{T-1} \omega_k - \sqrt{\Delta} \right)^2.
\end{equation*}

Maximizing the right part by $S$ we get

\begin{equation*}
    \omega_{T-1}^2 \varepsilon_T \leq \frac{2LR^2}{\gamma^2} +  \overline{\delta} \sum\limits_{k=0}^{T-1} \omega_k + \sqrt{\Delta}.
\end{equation*}

From \eqref{w_k_est} we get

\begin{equation}
    \begin{aligned}
    \omega_{T-1}^2 \varepsilon_T &\leq \frac{2LR^2}{\gamma^2} +  \overline{\delta} \sum\limits_{k=0}^{T-1} \omega_k +  \sqrt{\Delta}\\
    &\leq\frac{2LR^2}{\gamma^2} +  \frac{1}{2}T(T+1) \overline{\delta} + \sqrt{\Delta}.
    \end{aligned}
\end{equation}

Dividing the both parts of this inequality by $w_{T-1}^2 \leq T^2$ for $T> 0$ and using the lower estimate \eqref{w_k_est} for it we get

\begin{equation*}
    \begin{aligned}
 \varepsilon_T \leq \frac{8LR^2}{\gamma^2 T^2} + \frac{1}{2}\left(1+\frac{1}{T}\right)\overline{\delta} + \frac{1}{T^2}\sqrt{\Delta}
    \end{aligned}
\end{equation*}

By definition of $\Delta$ we have:

\begin{equation}
\label{Delta}
    \frac{1}{T^2}\sqrt{\Delta} =
    \left(4+\frac{13}{\sqrt{T}}\right)\delta_1 + 2 \left(1+\frac{3}{T}\right)^2\sqrt{\delta_2} + 5\sqrt{\frac{L\delta_4}{T}}.
\end{equation}

To get the coefficient before $\delta_4$ we use $\frac{8}{3}(T+1)^3\leq \frac{64}{3}T^3 < 25T^3$ for $T \geq 1$. Using the estimation \eqref{Delta} and the definition of $\overline{\delta}$ we have an estimate of the error of our algorithm

\begin{equation}
    \begin{aligned}
 \varepsilon_T \leq \frac{8LR^2}{\gamma^2 T^2} + A(T)\delta_1 + B(T)\sqrt{\delta_2} +  + \frac{1}{2}\left(1+\frac{1}{T}\right) \delta_3 + 5\sqrt{\frac{L\delta_4}{T}},
    \end{aligned}
\end{equation}
where $A(T) = \frac{1}{2}\left(1+\frac{1}{T}\right)\frac{R}{\gamma} + \left(4+\frac{13}{\sqrt{T}}\right);$ $B(T) = 2 \left(1+\frac{3}{T}\right)^2 $.

Obviously, for $T\geq 8$ we can write the following estimate:

\begin{equation}
    \begin{aligned}
 \varepsilon_T \leq \frac{8LR^2}{\gamma^2 T^2} + \left(\frac{R}{\gamma}+ 10\right)\delta_1 + 4\sqrt{\delta_2} +  \delta_3 + 5\sqrt{\frac{L\delta_4}{T}},
    \end{aligned}
\end{equation}
Q.E.D.
\end{proof}

\end{document}